\newtheorem{theorem}{Theorem}[section]
\newtheorem{corollary}[theorem]{Corollary}
\newtheorem{lemma}[theorem]{Lemma}
\newtheorem{proposition}[theorem]{Proposition}
\newtheorem{problem}[theorem]{Problem}
\theoremstyle{definition}
\newtheorem{assumption}[theorem]{Assumption}
\theoremstyle{remark}
\newcommand{\R}{\mathbb{R}}
\newcommand{\E}{\mathbb{E}}
\newcommand{\p}{\mathbb{P}}
\newcommand{\argmin}{\operatornamewithlimits{argmin}}
\newcommand{\argmax}{\operatornamewithlimits{argmax}}
\begin{document}
\title[Expected Supremum and Singular Stochastic Control]{Expected Supremum Representation of the Value of a Singular Stochastic Control Problem}
\author{Luis H. R. Alvarez E.}\thanks{Turku School of Economics, Department of Accounting and Finance, 20014 University of Turku, Finland,
e-mail:luis.alvarez@tse.fi}
\author{Pekka Matomäki}\thanks{Turku School of Economics, Department of Accounting and Finance, 20014 University of Turku, Finland,
e-mail:pjsila@utu.fi}
\subjclass[2010]{93E20, 60G40, 60J60, 49L20}
\date{June 23, 2015}

\begin{abstract}
We consider the problem of representing the value of singular stochastic control problems of linear diffusions as expected suprema. Setting the value accrued from following a standard reflection policy equal with the expected value of a unknown function at the running supremum of the underlying is shown to result into a functional equation from which the unknown function can be explicitly derived. We also consider the stopping problem associated with the considered singular stochastic control problem and present a similar representation as an expected supremum in terms of characteristics of the control problem.
\end{abstract}
\maketitle
\thispagestyle{empty}
\clearpage \setcounter{page}{1}
\section{Introduction}

Singular stochastic control problems arise quite naturally in economics and finance. Accordingly, a relatively large class of problems utilized in studying the rational management of renewable resources, optimal dividend distribution,  and optimal capital accumulation can be modeled as singular stochastic control problems (see, for example, \cite{Al2011,AlSh98,AlVi,CaSaZa,DeFeMo15,JS,Ko,LES1,LES2,LO,AOks2000}; for an extensive list of references and applications in economics,
see the seminal textbook \cite{Dixit1994}). Given this interest, various techniques for solving singular stochastic control problems have been developed. Especially in the case where the underlying constitutes an It{\^o}-diffusion process variational inequalities have proven to be a very useful tool for characterizing the optimal  policy and its value (\cite{AkMeSu96,BSW,ChMeRo,Kar1,Kar1984}). The advantage of that approach is that it naturally permits the utilization of stochastic calculus and as such applies in a multidimensional and finite horizon setting as well.
Alternatively, given the close connection of the value of the optimal policy with the value of an associated optimal stopping problem the singular stochastic control problem can alternatively be approached by focusing on the associated stopping problem and solving both problems by utilizing probabilistic techniques and relying directly on pathwise properties of the controlled process (cf. \cite{Ba,BaKa,BK,ElKaKa91,ElKarKar94,HaSu1,HaSu2,KS1}).
More recently, it has become apparent that these problems can also be approached by relying directly on the general properties of the running supremum and infimum of stochastic processes and utilizing these processes in the representation of the values of the considered control problems (cf. \cite{BaElKa,BaFo,BaRi,ElKaMe,ElKaFo}). Roughly speaking, these studies delineate a set of general conditions under which a process can be expressed as a functional of the running supremum or infimum of another process and utilize this observation in the characterization of the values of the considered stochastic control problems. This approach provides a general framework for analyzing stochastic control problems since it applies outside the Markovian and infinite horizon setting. Interestingly, this approach extends also to other representation problems and it can be utilized efficiently in the representation of excessive mappings and potentials as expected suprema (cf. \cite{FoKn1,FoKn2,ChSaTa}).

Our objective in this study is to extend the analysis developed in \cite{AlMa15} and  delineate circumstances under which the value of a class of singular stochastic control problems can be represented as an expected running supremum of a function depending on the underlying diffusion process. The main difference to former studies on the subject (e.g. \cite{BaElKa,BaFo,BaRi,ElKaMe,ElKaFo}) is the fact that we are able to derive, in a linear diffusion setting, both the representing function as well as the value of the expected supremum explicitly. These closed form representations allows us to extend our analysis to various ways and establish interesting connection between different representations.

Instead of determining the optimal policy and its value directly, we first consider the value of a standard reflection policy at an arbitrary constant boundary. We will introduce the problem in exact terms in the next section, but to get a general idea of the chosen approach, consider the value of a singular control (a {\em local time push}) characterized by downward reflection at an arbitrary constant threshold $y$:
\begin{align*}
H_y(x)=\E_x\left[\int_0^{\tau_0^Z} e^{-rs}\left(\alpha(X_s^{Z^y})-(R_r\pi)'(X_s^{Z^y})\right)dZ^y_s\right],
\end{align*}
where $X^{Z^y}$ denotes the reflected diffusion, $Z^y$ denotes the control policy, $\tau_0^Z=\inf\{t\geq 0: X_t^Z\leq 0\}$ denotes the exogenous liquidation date, and $(R_r\pi)(x)$ and $\alpha(x)$ are sufficiently nice functions. The principal objective of our study is to explicitly determine the representing function $f(x)$ for which
\begin{align*}
\E_x\left[f\left(\sup_{0\leq t\leq T_r} X_t\right)\mathbbm{1}_{(y,\infty)}\left(\sup_{0\leq t\leq T_r} X_t\right)\right]=H_y(x),
\end{align*}
where $T_r\sim \text{Exp}(r)$ is an exponentially distributed random time independent of the underlying diffusion $X$ killed at $0$ in the absence of interventions (see Theorems \ref{thm1} and \ref{singesitys1}). Computing the left hand side of the equation above by utilizing the explicitly known probability distribution of the supremum attained at an exponentially distributed random time and requiring equality of these two functional forms then results into an identity from which the unknown representing function can be explicitly determined.

Somewhat surprisingly, and in contrast with representation results of optimal stopping problems, the representing function $f$ is shown to vanish at an arbitrary reflection boundary $y$ --- independently of whether the boundary is optimal or not. Hence, the condition resulting to the validity of the standard smooth fit principle in an optimal stopping setting is not sufficient in the optimal reflection case. We demonstrate that in the singular stochastic control case optimality is attained at the threshold where the derivative $f'$ vanishes. As intuitively is clear, this condition is shown to coincide with the standard $C^2$-smoothness requirement across the optimal reflection boundary; a condition which typically arises in singular stochastic control problems of linear diffusions.

Given that the derivative of the value of a singular stochastic control problem usually coincides with the value of an associated optimal stopping problem, we also analyze the optimal stopping problem associated with the marginal value of the optimal reflection policy. Furthermore, we derive a similar supremum representation for the associated stopping problem under some circumstances as well. We find that the representing functions needed for the representations mentioned above are functionally dependent on each other so that the one associated with the stopping problem can be seen as multiple of the derivative of the representing function associated with the singular stochastic control problem. For the sake of completeness, we also analyze the associated nonlinear optimal stopping problem arising in the analysis of Gittins indices and characterize this value in terms of the derived functional forms.

The contents of this study are as follows. In section 2 we formulate the considered singular stochastic control problem and  characterize the underlying dynamics as well as the key functional forms needed later in the analysis. Our main findings on the considered representation problem are then summarized in 3. These findings are then illustrated numerically in an explicitly parameterized model in section 4. Finally, section 5 concludes our study.

\section{Singular Stochastic Control Problem}

Our objective is to characterize the value of a relatively broad class of singular stochastic control problems as expected suprema of a measurable function satisfying a set of regularity and monotonicity conditions stated later in this study. To this end, let  $(\Omega,\p,\{\mathcal{F}_t\}_{t\geq0},\mathcal{F})$ be a complete filtered probability space satisfying the usual conditions and assume that the underlying diffusion
process evolves on $\mathcal{I}=(0,b)\subseteq \R_+$ according to the dynamics described by the generalized stochastic
differential equation
\begin{align}
dX_t^Z = \mu(X_t^Z)dt + \sigma(X_t^Z)dW_t -  dZ_t,\quad
X_0^Z=x\in \mathcal{I},\label{singcontsde}
\end{align}
where the drift coefficient $\mu:\mathcal{I}\mapsto \mathbb{R}$ and the volatility coefficient $\sigma:\mathcal{I}\mapsto \mathbb{R}_+$ are assumed to be continuous and to satisfy the conditions
$\sigma(x)>0$ for all $x\in
\mathcal{I}$ and
$$
\int_{x-\varepsilon}^{x+\varepsilon}\frac{1 + |\mu(y)|}{\sigma^2(y)}dy < \infty
$$
for some $\varepsilon > 0$ and all $x\in \mathcal{I}$. These
conditions guarantee the existence of a weak solution for the
stochastic differential equation \eqref{singcontsde} (cf. \cite{Karatzas88}, pp.
342--353). The control policy $Z$ is assumed to satisfy $Z\in \mathcal{S}$, where $\mathcal{S}$ denotes the class of non-negative, non-decreasing,
right-continuous, and $\{{\mathcal F}_{t}\}$-adapted processes. In what follows, we will denote by $X$ the diffusion $X^Z$, when the control policy $Z$ does not directly affect the underlying randomly fluctuating process driving the stochasticity of the model. We will also assume throughout this study that the upper boundary $b$ is natural for the underlying diffusion $X$ and that the lower boundary is either natural, exit, entrance, or regular for $X$.

Given these dynamics, our objective is to
analyze the singular stochastic control problem
\begin{align}
V(x) = \sup_{Z\in \mathcal{S}} \mathbb{E}_x\int_0^{\tau_0^Z}e^{-rs}
\left(\pi(X_s^Z)ds + \alpha(X_s^Z) dZ_s\right), \label{singcont}
\end{align}
where $\pi:\mathcal{I}\mapsto \mathbb{R}$ is a known function measuring the revenue flow accrued from continuing operation and $\alpha:\mathcal{I}\mapsto\mathbb{R}_+$ denotes the state dependent marginal return associated with utilizing the control policy. We will later impose tighter regularity conditions on these mappings in order to be able to characterize the values and their representations as expected suprema explicitly.

As usually, we denote by $$\mathcal{A} = \frac{1}{2}\sigma^2(x)\frac{d^2}{dx^2}+\mu(x)\frac{d}{dx}$$ the differential operator representing the infinitesimal generator of the underlying diffusion $X$. As is known from the classical
theory on linear diffusions,
there are two linearly independent {\it fundamental solutions}
$\psi(x)$ and $\varphi(x)$ satisfying a set of appropriate boundary
conditions based on the boundary behavior of the process $X$ and
spanning the set of solutions of the ordinary second order differential equation
$(\mathcal{G}_ru)(x)=0$, where $\mathcal{G}_r=\mathcal{A}-r$ denotes the differential operator associated with the diffusion $X$ killed at the constant rate $r$. Moreover, $\psi'(x)\varphi(x) - \varphi'(x)\psi(x) = BS'(x),$
where $B>0$ denotes the constant Wronskian of the fundamental
solutions and  (for a comprehensive characterization of the fundamental solutions, see \cite{BorSal02}, pp. 18-19)
$$
S'(x)=\exp\left(-\int^x\frac{2\mu(t)}{\sigma^2(t)}dt\right)
$$
denotes the density of the scale function of $X$. In what follows, we will denote by $\hat{\psi}_z(x)= \psi(x) - \frac{\psi(z)}{\varphi(z)}\varphi(x)$ the increasing fundamental solution associated with the diffusion $X$ killed at the boundary $z\in \mathcal{I}$. It is clear that (cf. \cite{BorSal02}, p. 19)
$$
\lim_{z\downarrow 0}\hat{\psi}_z(x)=\begin{cases}
\psi(x) - \frac{\psi(0)}{\varphi(0)}\varphi(x) & \textrm{if }0\textrm{ is regular for }X\\
\psi(x) & \textrm{otherwise}.
\end{cases}
$$

Given the fundamental solutions characterized above, let $u(x)=c_1\psi(x) + c_2\varphi(x), c_1,c_2\in \mathbb{R}$ be an arbitrary twice continuously differentiable $r$-harmonic function and define for sufficiently smooth mappings $g:\mathcal{I}\mapsto\R$ the functional
\begin{align*}
(L_u g)(x) = g(x) \frac{u'(x)}{S'(x)}-\frac{g'(x)}{S'(x)}u(x)
\end{align*}
associated with the representing measure for $r$-excessive functions (cf. \cite{Salminen1985}). Noticing that if $g$ is twice continuously differentiable, then
$$
(L_u g)'(x)=-(\mathcal{G}_rg)(x)u(x)m'(x)
$$
implying that
\begin{align}\label{exrep}
(L_u g)(z)-(L_u g)(y)=\int_z^y(\mathcal{G}_rg)(t)u(t)m'(t)dt
\end{align}
for any $a<z<y<b$.
Finally, we denote by $\mathcal{L}_r^1(\mathcal{I})$ the class of measurable functions $f:\mathcal{I}\mapsto\R_+$ satisfying for all $x\in \mathcal{I}$ the integrability condition
$$
\mathbb{E}_x\int_0^{\tau_0} e^{-rs}|f(X_s)|ds<\infty,
$$
where $\tau_0=\inf\{t\geq 0:X_t=0\}$ denotes the first hitting time to $0$. As is known from the literature on linear diffusions, the expected cumulative present value of a function $f\in \mathcal{L}_r^1(\mathcal{I})$, that is,
$$
(R_rf)(x)=\mathbb{E}_x\int_0^{\tau_0} e^{-rs}f(X_s)ds
$$
can be expressed as
\begin{align}
(R_rf)(x)= B^{-1}\varphi(x)\int_0^x \hat{\psi}_0(y)f(y)m'(y)dy+B^{-1}\hat{\psi}_0(x)\int_x^b \varphi(y)f(y)m'(y)dy,\label{Green}
\end{align}
where
$
m'(x) = 2/(\sigma^{2}(x)S'(x))
$
denotes the density of the speed measure $m$ of $X$.

In order to guarantee that the considered singular stochastic control problem \eqref{singcont} is well-defined we first have to make some assumptions on the key functional forms. We first make the following assumptions:
\begin{assumption}\label{1assumptions}
We assume that throughout this section the following conditions are met:
\begin{itemize}
  \item[(a)] $\pi\in\mathcal{L}_r^1(\mathcal{I})$ and $\pi\in C^1(\mathcal{I})$,
  \item[(b)] $\alpha\in C^1(\mathcal{I})$, $\alpha(x)$ attains a unique global maximum at $x_\alpha\in[0,b)$ and is non-increasing on $(x_\alpha,b)$, and
  \item[(c)] $\lim_{x\uparrow b}\Lambda(x)/\hat{\psi}_0(x)=0$, where
    $$
\Lambda(x):=\int_0^x\alpha(z)dz.
$$
\end{itemize}
\end{assumption}
Assumption \ref{1assumptions}(a) essentially guarantees that the value accrued from leaving the opportunity to use the control policy unutilized is finite. Assumption \ref{1assumptions}(b), in turn, guarantees that while the marginal return associated with the control policy is decreasing as a function of the state (i.e. the smaller the reserves get, the higher the price of a remaining unit of stock becomes) its cumulative value cannot explode. In this way it creates incentives to maintain the reserves unutilized at least at low levels. Finally, assumption \ref{1assumptions}(c) guarantees that even though the cumulative marginal returns associated with the instantaneous application of the control policy are increasing, they do not explode at a rate which would result into the optimality of a single "take the money and run"- policy exerted at an arbitrarily high state (i.e. instantaneous depletion of reserves as soon as the underlying exceeds a beforehand determined threshold).

It is at this point worth commenting shortly our assumptions on the function $\alpha(x)$ characterizing the marginal return associated with the utilization of the control policy. As we will later establish in our paper, the existence and uniqueness of an optimal exercise threshold at which the underlying diffusion should be reflected downwards does not, per se, impose monotonicity requirements on $\alpha(x)$. Hence, the existence of such a barrier can be established even in cases where $\alpha(x)$ does not satisfy Assumption \ref{1assumptions}(b). Essentially, Assumption \ref{1assumptions}(b) is needed in order to guarantee that the initial optimal policy cannot be of the instantaneous shifting down to the optimal boundary type and that it has to be either of a standard instantaneous "local time" reflection type (cf. \cite{Shreve84}) or a {\em chattering down to the optimal threshold}-type (cf. \cite{Al00}). The latter of these policies does not belong into the set of admissible policies and, thus, it must be interpreted as a limit of admissible distribution strategies. In cases where $\alpha(x)$ is, for example, increasing the potentially optimal policies (at date $0$) are either of the reflection type or of the immediate shifting down to the optimal boundary-type. The reason for this argument is that if $\alpha(x)$ is increasing then we naturally have $\Lambda(x)-\Lambda(y)< \alpha(x)(x-y)$ which shows that the gradual distribution of reserves above an fixed potentially optimal boundary is suboptimal. Finally, in a general setting where $\alpha(x)$ in not monotonic, the optimal policy can be characterized as a combination of chattering and instantaneous shifting.

Define the mapping $A:\mathcal{I}\mapsto \mathbb{R}$ measuring the option value of depleting the reserves instantaneously instead of leaving the reserves uncontrolled by $A(x)=\Lambda(x)-(R_r\pi)(x)$. Let us now consider the parameterized family of continuously differentiable functions $F_y(x)=(R_r\pi)(x)+H_y(x)$, where (cf. \cite{AlVi})
\begin{align}\label{family1}
H_y(x) = \begin{cases}
A(x)- A(y) + A'(y)\frac{\hat{\psi}_0(y)}{\hat{\psi}_0'(y)}&x\in [y,b)\\
A'(y)\frac{\hat{\psi}_0(x)}{\hat{\psi}_0'(y)} &x\in(0,y).
\end{cases}
\end{align}
As is known from the literature on singular stochastic control, $F_y(x)$ represents the value accrued from following a reflecting strategy at a given exogenous boundary $y\in \mathcal{I}$ (cf. \cite{Shreve84}). Hence, the value $F_y(x)$ constitutes a decomposition where the first cumulative part $(R_r\pi)(x)$ measures the value of continuing operation with the prevailing stock $x$ while the second part $H_y(x)$ captures the excess value which can be accrued by utilizing a local time type control policy at a given exogenous boundary $y$. It is worth emphasizing that the function $H_y(x)$ is also closely related to the value of the associated singular stochastic control problem
\begin{align}\label{singular2}
H(x)=\sup_{Z\in \mathcal{S}}\E_x\int_0^{\tau_0^Z}e^{-rs}A'(X_s^Z)dZ_s.
\end{align}
To see that this is indeed the case in the present situation, consider only those admissible policies $Z\in \mathcal{S}$ which maintain the underlying diffusion in an arbitrary bounded set $(0,y]$. Utilizing in that case the generalized It{\^o}-Döblin theorem (Doléans-Dade-Meyer formula) to the function $(t,x)\mapsto e^{-rt}(R_r\pi)(x)$ yields (cf. Theorem 32 in \cite{Protter05}; see also Lemma 3.1 in \cite{Ma12})
\begin{align*}
\E_x\left[e^{-r T_n^Z}(R_r\pi)(X_{T_n^Z}^Z)\right] = (R_r\pi)(x)-\E_x\int_0^{T_n^Z}e^{-rs}\pi(X_s^Z)ds +\E_x\int_0^{T_n^Z}e^{-rs}(R_r\pi)'(X_s^Z)dZ_s,
\end{align*}
where $T_n^Z$ is a sequence of almost surely finite stopping times converging to $\tau_0^Z$ as $n\rightarrow\infty$. Letting $n\rightarrow\infty$ now yields
$$
\E_x\int_0^{\tau_0^Z}e^{-rs}\pi(X_s^Z)ds=(R_r\pi)(x)+\E_x\int_0^{\tau_0^Z}e^{-rs}(R_r\pi)'(X_s^Z)dZ_s
$$
for any admissible policy $Z\in \mathcal{S}$ maintaining the underlying diffusion in an arbitrary bounded set $(0,y]$. This shows how the singular stochastic control problems \eqref{singcont} and \eqref{singular2} are related.

\section{Representation of Values as Expected Suprema}

Let $M_t=\sup\{X_s;s\leq t\wedge\tau_0\}$ denote the running supremum of $\tilde{X}=\{X_t;t< \tau_0\}$ and assume that $T\sim\exp(r)$ is an exponentially distributed random time
which is independent of the underlying diffusion.
Our objective is to now consider the following identification problem:
\begin{problem}
Does there exist a function $\hat{f}(x)=f(x)\mathbbm{1}_{[y,b)}(x)$ such that for all $x\in\mathcal{I}$ we would have
$H_y(x)=\mathbb{E}_x[\hat{f}(M_T)]<\infty$ for all $x\in\mathcal{I}$. Under which conditions we have $H_y(x)=\mathbb{E}_x[\sup\{\hat{f}(X_t);t\leq T\}]<\infty$ for all $x\in \mathcal{I}$.
\end{problem}
Our objective is in what follows to characterize the function $\hat{f}$ explicitly. To this end, we first observe that since (cf. p. 26 in \cite{BorSal02})
$$
\mathbb{P}_x\left[M_T\geq y\right] = \frac{\hat{\psi}_0(x)}{\hat{\psi}_0(y)},
$$
we have
\begin{align}\label{esitys1}
\mathbb{E}_x[\hat{f}(M_T)] = \hat{\psi}_0(x)\int_{x\lor y}^bf(z)\frac{\hat{\psi}_0'(z)}{\hat{\psi}_0^2(z)}dz.
\end{align}
Thus, identity $H_y(x)=\mathbb{E}_x[\hat{f}(M_T)]$ holds if
\begin{align}\label{Volterra}
\int_{x\lor y}^bf(z)\frac{\hat{\psi}_0'(z)}{\hat{\psi}_0^2(z)}dz= \frac{H_y(x)}{\hat{\psi}_0(x)}=\begin{cases}
\frac{A(x)- A(y) + A'(y)\frac{\hat{\psi}_0(y)}{\hat{\psi}_0'(y)}}{\hat{\psi}_0(x)}&x\in [y,b)\\
\frac{A'(y)}{\hat{\psi}_0'(y)} &x\in(0,y).
\end{cases}
\end{align}
It is now a straightforward exercise to show that this identity holds for $x\geq y$ if the function $f(x)$ is chosen according to the identity (cf. Section 3.2 in \cite{BaFo})
\begin{align}
f(x)=A(x)-\frac{A'(x)}{\hat{\psi}_0'(x)}\hat{\psi}_0(x)-\left(A(y)-\frac{A'(y)}{\hat{\psi}_0'(y)}\hat{\psi}_0(y)\right)=
\frac{S'(x)}{\hat{\psi}_0'(x)}(L_{\hat{\psi}_0}A)(x)-\frac{S'(y)}{\hat{\psi}_0'(y)}(L_{\hat{\psi}_0}A)(y).\label{incsing}
\end{align}
We have, thus, been able to show the following representation result.
\begin{theorem}\label{thm1}
Assume that $f$ is defined as in \eqref{incsing}. Then, $H_y(x)=\mathbb{E}_x[f(M_T)\mathbbm{1}_{[y,b)}(M_T)]$ for all $x\in \mathcal{I}$.
\end{theorem}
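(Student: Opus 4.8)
The plan is to reduce the assertion to the deterministic identity \eqref{Volterra} and then verify that identity by displaying an explicit antiderivative. By \eqref{esitys1} we already know
\[
\E_x[\hat f(M_T)]=\hat{\psi}_0(x)\int_{x\lor y}^b f(z)\frac{\hat{\psi}_0'(z)}{\hat{\psi}_0^2(z)}\,dz ,
\]
so it suffices to show that $\int_{x\lor y}^b f(z)\hat{\psi}_0'(z)\hat{\psi}_0^{-2}(z)\,dz$ equals $H_y(x)/\hat{\psi}_0(x)$, with both sides finite, for every $x\in\mathcal{I}$. Note that $A=\Lambda-R_r\pi\in C^1(\mathcal{I})$ by Assumption \ref{1assumptions}(a)--(b), and $\hat{\psi}_0$ is $C^1$ with $\hat{\psi}_0'>0$, so $f$ defined by \eqref{incsing} is continuous and the integrand is well defined and positive.

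The substance is the region $x\ge y$. Write $C:=A'(y)\hat{\psi}_0(y)/\hat{\psi}_0'(y)-A(y)$, the additive constant occurring in \eqref{family1}, so that $H_y(x)=A(x)+C$ on $[y,b)$, and set $F(z):=-\,(A(z)+C)/\hat{\psi}_0(z)$. Differentiating, $F'(z)=\big[(A(z)+C)\hat{\psi}_0'(z)-A'(z)\hat{\psi}_0(z)\big]/\hat{\psi}_0^2(z)$, and inserting the closed form \eqref{incsing}, i.e.\ the relation $f(z)=A(z)+C-A'(z)\hat{\psi}_0(z)/\hat{\psi}_0'(z)$, this collapses to $F'(z)=f(z)\hat{\psi}_0'(z)/\hat{\psi}_0^2(z)$. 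Thus $F$ is an antiderivative of the integrand on $[y,b)$, and $\int_x^b f(z)\hat{\psi}_0'(z)\hat{\psi}_0^{-2}(z)\,dz=\lim_{z\uparrow b}F(z)-F(x)$ as soon as the limit exists. This is where the boundary hypotheses are used: since $b$ is natural, $\hat{\psi}_0(x)\uparrow\infty$ and $\varphi(x)/\hat{\psi}_0(x)\to 0$ as $x\uparrow b$; Assumption \ref{1assumptions}(c) gives $\Lambda(x)/\hat{\psi}_0(x)\to 0$; and the Green representation \eqref{Green} together with $\pi\in\mathcal{L}_r^1(\mathcal{I})$ yields $(R_r\pi)(x)/\hat{\psi}_0(x)\to 0$. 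Hence $\lim_{z\uparrow b}F(z)=0$, so the improper integral converges (in particular $\E_x[\hat f(M_T)]<\infty$) and equals $-F(x)=(A(x)+C)/\hat{\psi}_0(x)=H_y(x)/\hat{\psi}_0(x)$, which is \eqref{Volterra} for $x\in[y,b)$.

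For $x<y$ the lower limit of integration is $y$, so the integral no longer depends on $x$; evaluating the case just settled at $x=y$ identifies its value as $H_y(y)/\hat{\psi}_0(y)=A'(y)/\hat{\psi}_0'(y)$. Multiplying by $\hat{\psi}_0(x)$ returns $A'(y)\hat{\psi}_0(x)/\hat{\psi}_0'(y)$, which is exactly the lower branch of \eqref{family1}, so $\E_x[\hat f(M_T)]=H_y(x)$ there as well, completing the verification over all of $\mathcal{I}$.

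The only genuinely delicate step is the boundary analysis at $b$: establishing $\lim_{z\uparrow b}(A(z)+C)/\hat{\psi}_0(z)=0$ is what simultaneously forces the correct additive constant in the antiderivative and secures finiteness of the expectation, and it is precisely at this point that Assumption \ref{1assumptions}(c) and the integrability of $\pi$ do their work. The algebraic identity $F'(z)=f(z)\hat{\psi}_0'(z)/\hat{\psi}_0^2(z)$, which is what encodes the choice \eqref{incsing} of the representing function, is by contrast a routine differentiation.
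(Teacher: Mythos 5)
Your proposal is correct and follows essentially the same route as the paper: it reduces the claim to the Volterra identity \eqref{Volterra} via \eqref{esitys1} and then verifies that identity for the choice \eqref{incsing}. The only difference is that you actually carry out the verification --- exhibiting the antiderivative $F(z)=-(A(z)+C)/\hat{\psi}_0(z)$ and doing the boundary analysis at $b$ via Assumption \ref{1assumptions}(c) and the Green representation \eqref{Green} --- which the paper leaves as a ``straightforward exercise.''
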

According to Theorem \ref{thm1}, the excess value which can be accrued by utilizing a local time type control policy at a given exogenous boundary $y$
can be expressed as the expected value of a function $\hat{f}$ at the running maximum attained at an exponentially distributed random date. However,
despite this identity, Theorem \ref{thm1} does not make statements on the potential optimality of the considered control policy nor does it characterize circumstances under
which the value $H_y(x)$ could be expressed as an expected supremum. It is clear by definition of the function $\hat{f}$ that in order to represent the value
as an expected supremum, the function $\hat{f}$ needs to be nonnegative and nondecreasing. Since
$f(y+)=0$ by definition, we notice that $\hat{f}(x)$ is nonnegative and nondecreasing provided that
\begin{align}
f'(x)=-\frac{d}{dx}\left[\frac{A'(x)}{\hat{\psi}_0'(x)}\right]\hat{\psi}_0(x)\geq 0\label{monotonicity}
\end{align}
for all $x\geq y$. We can now prove our first main representation theorem related to the singular stochastic control problem \eqref{singcont}.
\begin{theorem}\label{singesitys1}
Assume that the function $A'(x)/\hat{\psi}_0'(x)$ attains a unique global maximum at the threshold $y^\ast=\argmax\left\{A'(x)/\hat{\psi}_0'(x)\right\}$ and that $A'(x)/\hat{\psi}_0'(x)$ is non-increasing on $[y^\ast,b)$. If $f$ is defined as in \eqref{incsing}, then
\begin{align}
H_y(x)=\mathbb{E}_x\left[\sup\left\{f(X_t)\mathbbm{1}_{[y,b)}(X_t);t\leq T\right\}\right]\label{rep1}
\end{align}
is $r$-excessive for $X$ for all $y\geq y^\ast$ and $V(x)=F_{y^\ast}(x)$.
\end{theorem}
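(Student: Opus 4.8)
The statement contains two assertions: the expected-supremum representation \eqref{rep1} together with the $r$-excessivity of $H_y$ for every $y\ge y^\ast$, and the optimality identity $V=F_{y^\ast}$. I would treat them in this order, the first feeding into the second.

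\emph{Representation and excessivity.} When $y\ge y^\ast$, the hypotheses force $A'/\hat{\psi}_0'$ to be non-increasing on $[y,b)\subseteq[y^\ast,b)$, so \eqref{monotonicity} gives $f'(x)=-\frac{d}{dx}\bigl[A'(x)/\hat{\psi}_0'(x)\bigr]\hat{\psi}_0(x)\ge0$ on $[y,b)$; since $f(y+)=0$ and $f\in C^1$, the function $\hat{f}:=f\mathbbm{1}_{[y,b)}$ is continuous, nonnegative and nondecreasing on $\mathcal{I}$. For such an $\hat{f}$ and the continuous paths of $X$ killed at $0$, one has the pathwise identity $\sup\{\hat{f}(X_t);t\le T\wedge\tau_0\}=\hat{f}(\sup\{X_t;t\le T\wedge\tau_0\})=\hat{f}(M_T)$, so \eqref{rep1} follows at once from Theorem \ref{thm1}. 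The $r$-excessivity is then an instance of the general fact that $x\mapsto\mathbb{E}_x[\sup\{g(X_t);t\le T\wedge\tau_0\}]$ with $g\ge0$ is $r$-excessive whenever it is finite: by the lack of memory of $T\sim\text{Exp}(r)$ and the Markov property, $\mathbb{E}_x[e^{-rh}H_y(X_h)]=\mathbb{E}_x\bigl[\mathbbm{1}_{\{T>h\}}\sup\{\hat{f}(X_t);h\le t\le T\wedge\tau_0\}\bigr]\le H_y(x)$, and the right-hand side increases to $H_y(x)$ as $h\downarrow0$; finiteness is already provided by Theorem \ref{thm1}.

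\emph{Lower bound and variational inequality.} By the discussion following \eqref{family1}, $F_y(x)=(R_r\pi)(x)+H_y(x)$ is the value of the admissible policy reflecting $X^Z$ downward at the constant boundary $y$, whence $V(x)\ge F_{y^\ast}(x)$, with $F_{y^\ast}$ finite by Assumption \ref{1assumptions}(a) and the first part. For the reverse inequality I would verify that $u:=F_{y^\ast}$ lies in $C^1(\mathcal{I})$, is nonnegative, and satisfies
\begin{align*}
\min\bigl\{-(\mathcal{G}_ru)(x)-\pi(x),\;u'(x)-\alpha(x)\bigr\}=0,\qquad x\in\mathcal{I}\setminus\{y^\ast\}.
\end{align*}
On $[y^\ast,b)$, $u=\Lambda+\text{const}$, so $u'=\alpha$; on $(0,y^\ast)$, $u=(R_r\pi)+A'(y^\ast)\hat{\psi}_0/\hat{\psi}_0'(y^\ast)$, and since $y^\ast$ maximizes $A'/\hat{\psi}_0'$ and $\hat{\psi}_0'>0$ we get $A'(y^\ast)\hat{\psi}_0'(x)/\hat{\psi}_0'(y^\ast)\ge A'(x)$, hence $u'(x)\ge(R_r\pi)'(x)+A'(x)=\alpha(x)$; at $y^\ast$ both one-sided first derivatives equal $\alpha(y^\ast)$, so $u\in C^1$. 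For the generator, $\mathcal{G}_ru+\pi=\mathcal{G}_r(R_r\pi)+\pi+\mathcal{G}_rH_{y^\ast}=\mathcal{G}_rH_{y^\ast}$, which vanishes on $(0,y^\ast)$ (there $H_{y^\ast}$ is a constant multiple of the $r$-harmonic $\hat{\psi}_0$) and is $\le0$ on $(y^\ast,b)$ since $H_{y^\ast}$, being $r$-excessive by the first part and of class $C^2$ there, is $r$-superharmonic. Finally $u=(R_r\pi)+H_{y^\ast}\ge0$ because $\pi,\alpha\ge0$.

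\emph{Closing the verification, and the main obstacle.} With these properties I would apply the generalized It\^o--D\"oblin (Dol\'eans-Dade--Meyer) formula --- the one invoked in the paragraph after \eqref{singular2} --- to $(t,x)\mapsto e^{-rt}u(x)$ along an arbitrary controlled process $X^Z$, localizing with stopping times $T_n^Z\uparrow\tau_0^Z$: the bound $u'\ge\alpha$ dominates the $dZ$-contributions (the jump corrections decompose as integrals of $u'$, hence dominate the corresponding integrals of $\alpha$ that make up the control gain), the bound $\mathcal{G}_ru+\pi\le0$ dominates the $ds$-contribution, and the isolated point $y^\ast$ is irrelevant for the occupation measure. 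Taking expectations, discarding the nonnegative boundary term $\mathbb{E}_x[e^{-rT_n^Z}u(X^Z_{T_n^Z})]$, and letting $n\to\infty$ by monotone convergence yields $\mathbb{E}_x\int_0^{\tau_0^Z}e^{-rs}\bigl(\pi(X^Z_s)ds+\alpha(X^Z_s)dZ_s\bigr)\le u(x)=F_{y^\ast}(x)$ for every $Z\in\mathcal{S}$; a supremum over $Z$ gives $V\le F_{y^\ast}$, which with the lower bound yields $V=F_{y^\ast}$. I expect the closing step to be the main obstacle: making the change-of-variables rigorous for the merely $C^1$ (piecewise $C^2$) function $u$ along a general singular control with a jump part, and --- if $\pi$ is allowed to change sign --- replacing the use of $u\ge0$ by a genuine transversality estimate $\liminf_n\mathbb{E}_x[e^{-rT_n^Z}u(X^Z_{T_n^Z})]\ge0$, which is exactly where the standing hypotheses on the boundary behavior of $X$ re-enter. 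Part one is, by comparison, routine once the monotonicity of $\hat{f}$ is in hand.
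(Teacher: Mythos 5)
Your first part tracks the paper's proof exactly: the monotonicity hypothesis gives $f'\ge 0$ via \eqref{monotonicity}, which with $f(y+)=0$ makes $\hat{f}=f\mathbbm{1}_{[y,b)}$ continuous, nonnegative and nondecreasing, so that $\sup\{\hat{f}(X_t);t\le T\}=\hat{f}(M_T)$ and Theorem \ref{thm1} yields \eqref{rep1}; the only cosmetic difference is that you prove the $r$-excessivity directly from memorylessness and the Markov property where the paper simply cites Proposition 2.1 of \cite{FoKn1} --- your argument is a correct unpacking of that citation. For the second assertion the routes genuinely diverge: the paper disposes of $V=F_{y^\ast}$ in one line by invoking Theorem 1 of \cite{Al00}, whereas you sketch the underlying verification theorem (the $C^1$-fit at $y^\ast$, the inequalities $u'\ge\alpha$ and $\mathcal{G}_ru+\pi\le 0$, and the It\^o--D\"oblin closing argument). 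Your computations of $u'$ on the two regions, the $C^1$ gluing, and the use of the just-established excessivity of $H_{y^\ast}$ to get $\mathcal{G}_rH_{y^\ast}\le 0$ on $(y^\ast,b)$ are all correct and have the merit of working under the weaker hypotheses of this theorem (which do not assume $(\mathcal{G}_rA)$ is monotone above $y^\ast$, unlike Theorem \ref{singesitys2}). The one substantive weak point is the one you flag yourself: since Assumption \ref{1assumptions} only requires $\pi:\mathcal{I}\mapsto\mathbb{R}$ with $\pi\in\mathcal{L}_r^1(\mathcal{I})$, the claim $u\ge 0$ is not available, so discarding the boundary term $\mathbb{E}_x[e^{-rT_n^Z}u(X^Z_{T_n^Z})]$ requires an actual transversality estimate (typically $\lim_n\mathbb{E}_x[e^{-rT_n^Z}(R_r\pi)(X^Z_{T_n^Z})]=0$ from $\pi\in\mathcal{L}_r^1$ plus nonnegativity of $H_{y^\ast}$, which does hold since $A'(y^\ast)\ge 0$ under the standing hypotheses); that estimate, together with the measure-theoretic care needed for the change of variables along a general $Z\in\mathcal{S}$, is precisely the content the paper outsources to \cite{Al00}. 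What your route buys is self-containedness and transparency about where each hypothesis is used; what the paper's route buys is brevity.
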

\begin{proof}
The assumed monotonicity of $A'(x)/\hat{\psi}_0'(x)$ on $[y^\ast,b)$ guarantees that
$$
\frac{d}{dx}\frac{A'(x)}{\hat{\psi}_0'(x)} \leq 0
$$
for all $x\geq y^\ast$. Condition $f(y+)=0$ and inequality \eqref{monotonicity} now guarantees that $\hat{f}$ is continuous, nonnegative, and monotonically increasing for all $y\geq y^\ast$ and, therefore,
that $H_y(x)=\mathbb{E}_x[\sup\{\hat{f}(X_t);t\leq T\}]$ in that case.  The $r$-excessivity of $H_y(x)$ then follows from Proposition 2.1 in \cite{FoKn1}.
The identity $V(x)=F_{y^\ast}(x)$ follows from Theorem 1 in \cite{Al00}.
\end{proof}
Theorem \ref{singesitys1} states in terms of the ratio $A'(x)/\hat{\psi}_0'(x)$ a set of sufficient conditions under which the value of a standard reflection policy at a given fixed threshold can be expressed as an expected supremum. Given that the value represents the expected cumulative returns obtained by following a
reflection policy, it is clear that the highest value which can be attained by following such a policy is the one where the reflection policy is exerted at the threshold maximizing the ratio $A'(x)/\hat{\psi}_0'(x)$. As was pointed out earlier in our paper, Theorem \ref{singesitys1} does not impose strong monotonicity requirements on the marginal return $\alpha(x)$. As is clear from Theorem \ref{singesitys1}, the validity of the considered representation is guaranteed as long as the ratio $A'(x)/\hat{\psi}_0'(x)$ attains a unique global maximum above which it is non-increasing.

A set of sufficient condition characterizing  circumstances under which the requirements of Theorem \ref{singesitys1} are met are now summarized in the following.
\begin{theorem}\label{singesitys2}
Assume that the function $(\mathcal{G}_rA)(x)$ satisfies the following conditions:
\begin{itemize}
  \item[(i)] there exists a $\hat{x}=\argmax\{(\mathcal{G}_rA)(x)\}\in \mathcal{I}$ so that $(\mathcal{G}_rA)(x)$ is nondecreasing on $(0,\hat{x})$  and non-increasing on $(\hat{x},b)$
  \item[(ii)] $\lim_{x\downarrow 0}(\mathcal{G}_rA)(x)\geq 0$ if $0$ is unattainable for $X$, $\lim_{x\downarrow 0}(\mathcal{G}_rA)(x)> 0$ if $0$ is attainable for $X$, and $\lim_{x\uparrow b}(\mathcal{G}_rA)(x)<-\varepsilon$, where $\varepsilon>0$.
\end{itemize}
Then, there exists a unique threshold
$$
y^\ast=\argmax\left\{\frac{A'(x)}{\hat{\psi}_0'(x)}\right\}\in(\hat{x},x_0)
$$
where $x_0=\inf\{x>\hat{x}: (\mathcal{G}_rA)(x)=0\}$, satisfying the ordinary first order optimality condition
\begin{align}
r\int_0^{y^\ast}\hat{\psi}_0(t)(\mathcal{G}_rA)(t)m'(t)dt-(\mathcal{G}_rA)(y^\ast)\frac{\hat{\psi}_0'(y^\ast)}{S'(y^\ast)}=0.\label{foc1}
\end{align}
Especially, if $f$ is defined as in \eqref{incsing}, then the representation \eqref{rep1}
is $r$-excessive for $X$ for all $y\geq y^\ast$ and $V(x)=F_{y^\ast}(x)$, where
\begin{align}
F_{y^\ast}(x)=(R_r\pi)(x) + \frac{(\mathcal{G}_rA)(y^\ast)}{r}  - \mathbb{E}_x\left[\inf\left\{\frac{S'(X_t\lor y^\ast)}{\hat{\psi}_0'(X_t\lor y^\ast)}\int_0^{X_t\lor y^\ast}\hat{\psi}_0(s)(\mathcal{G}_rA)(s)m'(s)ds;t\leq T\right\}\right].\label{rep2}
\end{align}
\end{theorem}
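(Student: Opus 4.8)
The plan is to reduce the whole statement to the behaviour of the ratio $G(x):=A'(x)/\hat{\psi}_0'(x)$, which by \eqref{monotonicity} governs $f$ through $f'(x)=-G'(x)\hat{\psi}_0(x)$. I would first show that $G'$ has the same sign as an explicit function $q$ built from $(\mathcal{G}_rA)$, that $q$ inherits a unimodal shape from $(\mathcal{G}_rA)$, and that hypotheses (i)--(ii) single out the unique sign-change point $y^\ast$ of $q$ --- which is at once $\argmax\{A'(x)/\hat{\psi}_0'(x)\}$ and the solution of \eqref{foc1}. The optimality $V(x)=F_{y^\ast}(x)$ and the closed form \eqref{rep2} will then follow from Theorem \ref{singesitys1} together with \eqref{foc1}.

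For the differential identity I would start from the right-hand expression in \eqref{incsing}, $f(x)=\frac{S'(x)}{\hat{\psi}_0'(x)}(L_{\hat{\psi}_0}A)(x)-\text{const}$, and differentiate using two elementary facts: $(L_{\hat{\psi}_0}A)'(x)=-(\mathcal{G}_rA)(x)\hat{\psi}_0(x)m'(x)$ (recorded before \eqref{exrep}) and $\frac{d}{dx}\bigl(\hat{\psi}_0'(x)/S'(x)\bigr)=r\hat{\psi}_0(x)m'(x)$, which is $(L_{\hat{\psi}_0}\mathbbm{1})'(x)$ and follows from the $r$-harmonicity of $\hat{\psi}_0$. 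Combined with the normalisation $(L_{\hat{\psi}_0}A)(x)=-\int_0^x(\mathcal{G}_rA)(s)\hat{\psi}_0(s)m'(s)ds$ --- obtained by integrating \eqref{exrep} from $0$ and verifying $(L_{\hat{\psi}_0}A)(0+)=0$ via $\hat{\psi}_0(0+)=0$, Assumption \ref{1assumptions}, the representation \eqref{Green}, and the Wronskian identity $\hat{\psi}_0'\varphi-\hat{\psi}_0\varphi'=BS'$ --- this yields
$$
G'(x)=\frac{m'(x)\,(S'(x))^2}{(\hat{\psi}_0'(x))^2}\,q(x),\qquad q(x):=(\mathcal{G}_rA)(x)\frac{\hat{\psi}_0'(x)}{S'(x)}-r\int_0^x(\mathcal{G}_rA)(s)\hat{\psi}_0(s)m'(s)ds,
$$
so that $\operatorname{sign}G'(x)=\operatorname{sign}q(x)$ on $\mathcal{I}$ and the equation $q(y^\ast)=0$ is exactly the first order condition \eqref{foc1}.

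Next I would analyse $q$. Writing $\hat{\psi}_0'(x)/S'(x)=\lim_{s\downarrow 0}\hat{\psi}_0'(s)/S'(s)+r\int_0^x\hat{\psi}_0(s)m'(s)ds$ gives $q(x)=(\mathcal{G}_rA)(x)\lim_{s\downarrow0}\frac{\hat{\psi}_0'(s)}{S'(s)}+r\int_0^x\bigl[(\mathcal{G}_rA)(x)-(\mathcal{G}_rA)(s)\bigr]\hat{\psi}_0(s)m'(s)ds$; from this form, hypothesis (i) shows at once that $q$ is nondecreasing on $(0,\hat{x})$ and non-increasing on $(\hat{x},b)$, hence unimodal with maximum at $\hat{x}$, and that $q\ge0$ on $(0,\hat{x}]$ with $q(\hat{x})>0$. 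Hypothesis (ii) then gives $(\mathcal{G}_rA)(\hat{x})>0$ and $(\mathcal{G}_rA)(b-)<-\varepsilon<0$, so $x_0=\inf\{x>\hat{x}:(\mathcal{G}_rA)(x)=0\}\in(\hat{x},b)$ exists, $(\mathcal{G}_rA)>0$ on $(0,x_0)$, and therefore $q(x_0)=-r\int_0^{x_0}(\mathcal{G}_rA)(s)\hat{\psi}_0(s)m'(s)ds<0$. Continuity and monotonicity of $q$ on $(\hat{x},b)$ then produce a unique zero $y^\ast\in(\hat{x},x_0)$ with $q>0$ on $(0,y^\ast)$ and $q<0$ on $(y^\ast,b)$; by the sign identity, $G=A'/\hat{\psi}_0'$ is strictly increasing on $(0,y^\ast)$ and strictly decreasing on $[y^\ast,b)$, so $y^\ast=\argmax\{A'(x)/\hat{\psi}_0'(x)\}$ is its unique maximiser and \eqref{foc1} holds.

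This verifies the hypotheses of Theorem \ref{singesitys1}, so \eqref{rep1} is $r$-excessive for every $y\ge y^\ast$ and $V(x)=F_{y^\ast}(x)$. For the explicit form \eqref{rep2} I would note that, since $\hat{f}=f\mathbbm{1}_{[y^\ast,b)}$ is nonnegative, nondecreasing on $[y^\ast,b)$, and vanishes at $y^\ast+$, one has $\sup\{\hat{f}(X_t):t\le T\}=\sup\{f(X_t\lor y^\ast):t\le T\}$, while \eqref{incsing} together with the normalisation above gives $f(x\lor y^\ast)=P(y^\ast)-P(x\lor y^\ast)$, where $P(x):=\frac{S'(x)}{\hat{\psi}_0'(x)}\int_0^x\hat{\psi}_0(s)(\mathcal{G}_rA)(s)m'(s)ds$, and \eqref{foc1} evaluates $P(y^\ast)=(\mathcal{G}_rA)(y^\ast)/r$. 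Substituting into $H_{y^\ast}(x)=\E_x[\sup\{f(X_t\lor y^\ast):t\le T\}]=P(y^\ast)-\E_x[\inf\{P(X_t\lor y^\ast):t\le T\}]$ and adding $(R_r\pi)(x)$ yields \eqref{rep2}. The main obstacle is the differential identity: beyond the $r$-harmonic bookkeeping, it hinges on the non-trivial verification that $(L_{\hat{\psi}_0}A)(0+)=0$ and that $\lim_{s\downarrow0}\hat{\psi}_0'(s)/S'(s)$ is the appropriate limit for each admissible classification of the boundary $0$ --- and it is precisely here that Assumption \ref{1assumptions} and the subtle distinction in (ii) between attainable and unattainable $0$ enter. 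Once the identities $\operatorname{sign}G'=\operatorname{sign}q$ and the rewriting of $q$ are secured, the remaining steps are routine.
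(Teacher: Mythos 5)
Your proposal is correct and follows essentially the same route as the paper: the differential identity for $\frac{d}{dx}\bigl(A'(x)/\hat{\psi}_0'(x)\bigr)$, the sign analysis of the resulting bracketed term under (i)--(ii) (which the paper delegates to the proof of Theorem 2 in \cite{Al00} but you carry out in full), reduction to Theorem \ref{singesitys1}, and the Green-kernel identity combined with \eqref{foc1} to obtain \eqref{rep2}. One remark: your sign convention for $q$ is the correct one --- the bracket printed in the paper's proof equals $-q(x)$, which taken literally would make $y^\ast$ a \emph{minimiser} of $A'/\hat{\psi}_0'$ (the paper's proof even says ``minimizing threshold''), so that is evidently a typo there rather than an error in your derivation.
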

\begin{proof}
As is well-known from the literature on diffusion processes, the resolvent $R_r$ and the infinitesimal generator $\mathcal{A}-r$ are inverse operators (see e.g. III.4 in \cite{RogWil00}). Hence, under our assumptions we naturally have $A(x)=-\left(R_r(\mathcal{G}_rA)\right)(x)$.  By applying the representation \eqref{Green}, we observe that the existence and uniqueness of the minimizing threshold $y^\ast$ now follows from the identity
\begin{align*}
\frac{d}{dx}\frac{A'(x)}{\hat{\psi}_0'(x)} = \frac{2S'(x)}{\sigma^2(x)\hat{\psi}_0'^{2}(x)}\left[r\int_0^x\hat{\psi}_0(t)(\mathcal{G}_rA)(t)m'(t)dt-(\mathcal{G}_rA)(x)\frac{\hat{\psi}_0'(x)}{S'(x)}\right]
\end{align*}
in a completely analogous way with the proof of Theorem 2 in \cite{Al00}.
Utilizing this identity now proves that
$$
\frac{d}{dx}\frac{A'(x)}{\hat{\psi}_0'(x)} = -\frac{d}{dx}\frac{\left(R_r(\mathcal{G}_rA)\right)'(x)}{\hat{\psi}_0'(x)} \leq 0
$$
for all $x\geq y^\ast$. Condition $f(y+)=0$ and inequality \eqref{monotonicity} now guarantees that $\hat{f}$ is continuous, nonnegative, and monotonically increasing for all $y\geq y^\ast$ and, therefore,
that $H_y(x)=\mathbb{E}_x[\sup\{\hat{f}(X_t);t\leq T\}]$ in that case.  The $r$-excessivity of $H_y(x)$ then follows from Proposition 2.1 in \cite{FoKn1}.
Finally, applying representation \eqref{Green} to the present problem yields
$$
A(x)-\frac{A'(x)}{\psi'(x)}\psi(x)=-\frac{S'(x)}{\hat{\psi}_0'(x)}\int_0^x\hat{\psi}_0(t)(\mathcal{G}_rA)(t)m'(t)dt.
$$
Thus, we observe by setting $y=y^\ast$ that
$$
\hat{f}(x)=\frac{(\mathcal{G}_rA)(y^\ast)}{r}-\frac{S'(x\lor y^\ast)}{\hat{\psi}_0'(x\lor y^\ast)}\int_0^{x\lor y^\ast}\hat{\psi}_0(t)(\mathcal{G}_rA)(t)m'(t)dt
$$
demonstrating the validity of the proposed representation \eqref{rep2}.
\end{proof}
Theorem \ref{singesitys2} presents a set of easily verifiable sufficient conditions under which the value of the considered singular stochastic control problem admits a
representation as an expected supremum. Since this representation is closely associated with the excess value which can be accrued by controlling the underlying dynamics, we find
that \eqref{rep2} provides a supremum interpretation of the maximal yield accrued from controlling the reserves in an optimal fashion. An interesting corollary of Theorem \ref{singesitys2} is summarized in the following.
\begin{corollary}\label{signcor}
Assume that the conditions of Theorem \ref{singesitys2} are satisfied and that the  representing function $f$ is defined as in \eqref{incsing}. Then, $f(x)\geq 0$ for all $x\in \mathcal{I}$ only if $y=y^\ast$.
\end{corollary}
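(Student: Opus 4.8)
The plan is to reduce the statement to the fact that, under the hypotheses of Theorem \ref{singesitys2}, the auxiliary map
$$
G(x):=A(x)-\frac{A'(x)}{\hat{\psi}_0'(x)}\hat{\psi}_0(x)=\frac{S'(x)}{\hat{\psi}_0'(x)}(L_{\hat{\psi}_0}A)(x)
$$
attains its global minimum over $\mathcal{I}$ at the single point $y^\ast$. Indeed, by \eqref{incsing} the representing function is the shift $f(x)=G(x)-G(y)$ for every $x\in\mathcal{I}$, so $f(x)\geq0$ for all $x\in\mathcal{I}$ holds precisely when $G(y)=\min_{x\in\mathcal{I}}G(x)$; granting that this minimum is attained only at $y^\ast$, the corollary follows, since for $y\neq y^\ast$ we then have $f(y^\ast)=G(y^\ast)-G(y)<0$ with $y^\ast\in(\hat{x},x_0)\subseteq\mathcal{I}$.

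To obtain the shape of $G$, I would start from the identity $G'(x)=f'(x)=-\frac{d}{dx}\!\left[\frac{A'(x)}{\hat{\psi}_0'(x)}\right]\hat{\psi}_0(x)$ of \eqref{monotonicity}: since $\hat{\psi}_0>0$ on $\mathcal{I}$, the monotonicity of $G$ is the reverse of that of $A'/\hat{\psi}_0'$. The monotonicity of the latter is governed by the function
$$
C(x):=r\int_0^x\hat{\psi}_0(t)(\mathcal{G}_rA)(t)m'(t)\,dt-(\mathcal{G}_rA)(x)\frac{\hat{\psi}_0'(x)}{S'(x)}
$$
appearing in the first-order condition \eqref{foc1}: writing $A=-R_r(\mathcal{G}_rA)$ and differentiating the Green representation \eqref{Green} as in the proof of Theorem \ref{singesitys2}, $\frac{d}{dx}\!\left[A'(x)/\hat{\psi}_0'(x)\right]$ equals a strictly positive factor times $-C(x)$. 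Using $\mathcal{G}_r\hat{\psi}_0=0$ (so that $\frac{d}{dx}(\hat{\psi}_0'/S')=r\hat{\psi}_0 m'$) one gets $C'(x)=-(\mathcal{G}_rA)'(x)\hat{\psi}_0'(x)/S'(x)$; by condition (i) this makes $C$ non-increasing on $(0,\hat{x})$ and non-decreasing on $(\hat{x},b)$. Condition (ii) forces $C(0+)=-(\mathcal{G}_rA)(0+)\lim_{x\downarrow0}\big(\hat{\psi}_0'(x)/S'(x)\big)\leq0$, the limit being a nonnegative number since $\hat{\psi}_0'/S'$ is positive and increasing, and \eqref{foc1} gives $C(y^\ast)=0$. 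As $y^\ast\in(\hat{x},x_0)$, piecing these together yields $C\leq0$ on $(0,y^\ast]$ and $C\geq0$ on $[y^\ast,b)$; hence $A'/\hat{\psi}_0'$ is nondecreasing on $(0,y^\ast]$ and non-increasing on $[y^\ast,b)$, and therefore $G$ is non-increasing on $(0,y^\ast]$ and non-decreasing on $[y^\ast,b)$. In particular $y^\ast$ is a global minimizer of $G$ over $\mathcal{I}$.

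Uniqueness of the minimizer then follows from the uniqueness of the maximizer $y^\ast$ of $A'/\hat{\psi}_0'$ established in Theorem \ref{singesitys2}: if $G(y)=G(y^\ast)$ for some $y\neq y^\ast$, the monotonicity just derived would force $G$, and hence $A'/\hat{\psi}_0'$, to be constant on the closed interval between $y$ and $y^\ast$, so that $A'/\hat{\psi}_0'$ would attain its global maximum at $y$ as well as at $y^\ast$ — a contradiction. Thus $G$ is minimized over $\mathcal{I}$ only at $y^\ast$, and the reduction of the first paragraph finishes the proof.

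The main obstacle I anticipate is the two-sided sign bookkeeping of the middle paragraph. The proof of Theorem \ref{singesitys2} records only that $\frac{d}{dx}\!\left[A'/\hat{\psi}_0'\right]\leq0$ on $[y^\ast,b)$, whereas here one also needs the sign on $(0,y^\ast]$; establishing it requires re-running the unimodality argument for $(\mathcal{G}_rA)$ from condition (i) together with the boundary inequality of condition (ii), and in particular keeping track of the behaviour of $\hat{\psi}_0'/S'$ at the lower boundary under each of its admissible classifications (natural, exit, entrance, regular). Once the valley shape of $G$ is in hand, the passage from "$y^\ast$ is a global minimizer" to "$y^\ast$ is the unique global minimizer" is routine.
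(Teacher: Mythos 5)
Your proof is correct and takes the same route as the paper's one-line argument: reduce the claim via \eqref{incsing} to the fact that $y^\ast$ is the unique global minimizer of $G(x)=A(x)-\frac{A'(x)}{\hat{\psi}_0'(x)}\hat{\psi}_0(x)$, which the paper simply asserts and you prove in full via the valley shape of $G$ coming from the sign of $C$ on either side of $y^\ast$. As a side remark, your sign convention $\frac{d}{dx}\bigl[A'(x)/\hat{\psi}_0'(x)\bigr]=-(\text{positive factor})\cdot C(x)$ is the correct one (it is the only one consistent with $y^\ast$ being a maximum of $A'/\hat{\psi}_0'$), and it silently corrects a sign typo in the displayed identity in the paper's proof of Theorem \ref{singesitys2}.
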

\begin{proof}
Noticing that $y^\ast=\argmin\left\{A(x)-\frac{A'(x)}{\hat{\psi}_0'(x)}\hat{\psi}_0(x)\right\}$ demonstrates that the alleged result follows from \eqref{incsing}.
\end{proof}
According to Corollary \ref{signcor}, the representing function $f$ defined in \eqref{incsing} can be nonnegative on the entire state space $\mathcal{I}$ only if the constant boundary $y$ is chosen optimally. Otherwise, the set where $f$ is nonpositive is nonempty. This is an interesting observation, since even though the representing function $f$ is constructed so as to guarantee that condition $f(y)=0$ is always met, it is not necessarily nonnegative on $\mathcal{I}$.

Theorems \ref{thm1} and \ref{singesitys2} characterize the auxiliary function $f$ needed for the representation of the value as an expected supremum explicitly by deriving first the value of the optimal strategy and then solving the Volterra equation \eqref{Volterra}. It is, naturally, of interest to ask if the reverse is also true. More precisely, can the value and optimal strategy be directly characterized by a given $f$ possessing a set of suitable properties? A set of circumstances under which the answer to this question is positive are summarized in the following proposition.
\begin{proposition}\label{prop y}
Assume that $\hat{\psi}_0(x)$ is convex and let $y^\ast:=\inf\{y\in \mathcal{I}: \tilde{f}(y)\geq 0\}$, where $\tilde{f}:\mathcal{I}\mapsto \mathbb{R}$ is defined by
$$
\tilde{f}(x) = A'(x)-\frac{A''(x)}{\hat{\psi}_0''(x)}\hat{\psi}_0'(x).
$$
Assume also that $\tilde{f}(x)$ is positive for all $x>y^*$. Then, $y^\ast=\argmax\left\{A'(y)/\psi'(y)\right\}$ and the conclusions of Theorem \ref{singesitys1} hold.
\end{proposition}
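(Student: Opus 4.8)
The plan is to identify the threshold $y^\ast$ supplied by $\tilde f$ with the global maximizer of the ratio $A'(x)/\hat{\psi}_0'(x)$ that governs Theorem \ref{singesitys1}; once that identification is in place, the $r$-excessivity of the representation \eqref{rep1} for every $y\ge y^\ast$ and the equality $V(x)=F_{y^\ast}(x)$ follow immediately by invoking that theorem. Thus the whole argument reduces to a monotonicity analysis of $x\mapsto A'(x)/\hat{\psi}_0'(x)$.

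First I would differentiate directly,
\[
\frac{d}{dx}\,\frac{A'(x)}{\hat{\psi}_0'(x)}=\frac{A''(x)\hat{\psi}_0'(x)-A'(x)\hat{\psi}_0''(x)}{\hat{\psi}_0'(x)^{2}},
\]
and observe that the numerator equals $-\hat{\psi}_0''(x)\,\tilde f(x)$ by the very definition of $\tilde f$. Since $\hat{\psi}_0$ is the increasing fundamental solution, $\hat{\psi}_0'>0$ on $\mathcal I$, while convexity gives $\hat{\psi}_0''\ge 0$; hence
\[
\frac{d}{dx}\,\frac{A'(x)}{\hat{\psi}_0'(x)}=-\,\frac{\hat{\psi}_0''(x)}{\hat{\psi}_0'(x)^{2}}\,\tilde f(x),
\]
so the derivative of the ratio has the opposite sign to $\tilde f(x)$, and vanishes exactly where $\hat{\psi}_0''(x)=0$.

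Next I would feed in the definition $y^\ast=\inf\{y\in\mathcal I:\tilde f(y)\ge 0\}$: for $x<y^\ast$ one has $\tilde f(x)<0$, while $\tilde f(x)>0$ for $x>y^\ast$ by hypothesis. Combined with the last display, $A'(x)/\hat{\psi}_0'(x)$ is non-decreasing on $(0,y^\ast)$ and non-increasing on $(y^\ast,b)$, so by continuity (recall $A\in C^2$ and $\hat{\psi}_0\in C^2$) it attains its global maximum at $y^\ast$; the strictness of the two sign conditions on $\tilde f$ makes the monotonicity strict off the zero set of $\hat{\psi}_0''$, which cannot fill a whole subinterval for a nontrivial solution of $\mathcal G_r u=0$ in a non-degenerate model, so this maximizer is unique. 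Therefore $y^\ast=\argmax\{A'(y)/\hat{\psi}_0'(y)\}$ and $A'(x)/\hat{\psi}_0'(x)$ is non-increasing on $[y^\ast,b)$ --- exactly the standing hypotheses of Theorem \ref{singesitys1} --- and its conclusions deliver the conclusions of the proposition.

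The step I expect to be the main obstacle is the factorization of the numerator as $-\hat{\psi}_0''\tilde f$ together with the sign bookkeeping near zeros of $\hat{\psi}_0''$: one has to make sure $\tilde f$ is finite and continuous there, and that such zeros do not accumulate enough to leave $A'/\hat{\psi}_0'$ merely non-decreasing (hence with a possibly flat, non-unique maximizer) instead of strictly increasing. Under strict convexity of $\hat{\psi}_0$ this is automatic; in the borderline convex case it reduces to noting that a solution of $\mathcal G_r u=0$ with $\sigma>0$ is affine on an interval only for an affine drift $\mu(x)=rx+\mathrm{const}$, a degenerate configuration that can be excluded or treated separately.
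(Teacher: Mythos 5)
Your proof is correct and follows essentially the same route as the paper: the paper's argument is exactly the identity $f'(x)=-\frac{d}{dx}\bigl(A'(x)/\hat{\psi}_0'(x)\bigr)\hat{\psi}_0(x)=\tilde{f}(x)\,\hat{\psi}_0''(x)\hat{\psi}_0(x)/\hat{\psi}_0'(x)^2$, i.e.\ your factorization of the numerator as $-\hat{\psi}_0''\tilde{f}$, followed by the sign analysis of $\tilde{f}$ across $y^\ast$ and an appeal to Theorem \ref{singesitys1}. Your extra care about zeros of $\hat{\psi}_0''$ and uniqueness of the maximizer goes slightly beyond what the paper records, but does not change the argument.
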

\begin{proof}
The identity $y^\ast=\argmax\left\{A'(y)/\psi'(y)\right\}$ follows directly from
\begin{align*}
f'(x)=-\frac{d}{dx}\left(\frac{A'(x)}{\psi'(x)}\right)\psi(x)=\tilde{f}(x)\frac{\psi''(x)}{\psi'(x)^2}\psi(x).
\end{align*}
Furthermore, since $\tilde{f}(x)> 0$ on $(y^\ast, b)$ by assumption, we notice that $f'(x)>0$ for all $x>y^\ast$. The alleged claim now follows from Theorem \ref{singesitys1}.
\end{proof}
Proposition \ref{prop y} delineates circumstances under which the optimal reflection threshold can be identified as the smallest state at which an associated function $\tilde{f}$ remains positive.
As we will see in what follows, the function $\tilde{f}$ is related to a stopping problem characterizing the  marginal value of the optimal policy.

\section{Associated Optimal Stopping Problem}
Having analyzed the representation of the value of the optimal singular policy as an expected supremum, we now plan to consider the special case where
the marginal value $V'(x)$ of the considered singular stochastic control problem \eqref{singcont} constitutes the solution of an associated stopping problem and extend our representation to that setting as well. More precisely, we now consider the parameterized family of continuous functions $F_y'(x)=(R_r\pi)'(x)+H_y'(x)$, where
\begin{align}\label{family2}
H_y'(x) = \begin{cases}
A'(x)&x\in [y,b)\\
A'(y)\frac{\hat{\psi}_0'(x)}{\hat{\psi}_0'(y)} &x\in(0,y).
\end{cases}
\end{align}
Before proceeding we establish the following auxiliary result.
\begin{lemma}\label{apua1}
Assume that conditions of Theorem \ref{singesitys2} are satisfied. In that case the marginal value $V'(x)=(R_r\pi)'(x)+H_{y^\ast}'(x)$ of the optimal policy can be expressed as
\begin{align}\label{family3}
V'(x)= (R_r\pi)'(x)+\inf_{y\geq x}\left\{\frac{A'(y)}{\hat{\psi}_0'(y)}\right\}\hat{\psi}_0'(x) = (R_r\pi)'(x)+\frac{A'(y^\ast\lor x)}{\hat{\psi}_0'(y^\ast\lor x)}\hat{\psi}_0'(x).
\end{align}
Moreover, the mapping $(\mathcal{G}_rV)(x)+\pi(x)$ is non-positive and non-increasing on $\mathcal{I}$.
\end{lemma}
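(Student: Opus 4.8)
The plan is to prove the two assertions of Lemma \ref{apua1} in sequence, since the first is essentially a repackaging of the optimality structure already established in Theorem \ref{singesitys2} and the second follows by differentiating the decomposition $V = F_{y^\ast}$.

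For the representation \eqref{family3}, I would start from the known formula $V(x) = F_{y^\ast}(x) = (R_r\pi)(x) + H_{y^\ast}(x)$ provided by Theorem \ref{singesitys2} together with the explicit form of $H_y$ in \eqref{family1}. Differentiating \eqref{family1} with respect to the state $x$ (not the parameter $y$) gives exactly $H_y'(x) = A'(x)$ on $[y,b)$ and $H_y'(x) = A'(y)\hat{\psi}_0'(x)/\hat{\psi}_0'(y)$ on $(0,y)$, which is \eqref{family2}; evaluating at $y=y^\ast$ yields $V'(x) = (R_r\pi)'(x) + H_{y^\ast}'(x)$. The remaining point is to identify $H_{y^\ast}'(x)$ with $\inf_{y\ge x}\{A'(y)/\hat{\psi}_0'(y)\}\,\hat{\psi}_0'(x)$. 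For $x\le y^\ast$ this is immediate: $H_{y^\ast}'(x) = (A'(y^\ast)/\hat{\psi}_0'(y^\ast))\hat{\psi}_0'(x)$, and since $A'/\hat{\psi}_0'$ attains its global maximum at $y^\ast$ and is non-increasing on $[y^\ast,b)$, the infimum of $A'/\hat{\psi}_0'$ over $y\ge x$ when $x\le y^\ast$ is taken in the limit $y\to b$... so one has to be careful here: actually the correct reading, consistent with the right-hand equality $A'(y^\ast\lor x)/\hat{\psi}_0'(y^\ast\lor x)$, is that the infimum over $y\ge x$ equals $A'(y^\ast\lor x)/\hat{\psi}_0'(y^\ast\lor x)$ because $A'/\hat{\psi}_0'$ is increasing on $(0,y^\ast)$ and decreasing on $(y^\ast,b)$ with the limiting value at $b$ dominated; the monotonicity hypotheses of Theorem \ref{singesitys2}, specifically $\lim_{x\uparrow b}(\mathcal{G}_rA)(x) < -\varepsilon$, must be invoked to control the $b$-limit of $A'/\hat{\psi}_0'$ and ensure the infimum is attained at $y^\ast\lor x$. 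So the first step is: establish, using the sign and monotonicity of $(\mathcal{G}_rA)$ and the identity for $\frac{d}{dx}(A'/\hat{\psi}_0')$ from the proof of Theorem \ref{singesitys2}, that $\inf_{y\ge x}\{A'(y)/\hat{\psi}_0'(y)\} = A'(y^\ast\lor x)/\hat{\psi}_0'(y^\ast\lor x)$, and then match with \eqref{family2}. For $x\ge y^\ast$ the identity $H_{y^\ast}'(x)=A'(x)=(A'(x)/\hat{\psi}_0'(x))\hat{\psi}_0'(x)$ is automatic.

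For the second assertion, I would compute $(\mathcal{G}_rV)(x)+\pi(x)$ piecewise. On $(0,y^\ast)$, the function $H_{y^\ast}(x) = A'(y^\ast)\hat{\psi}_0(x)/\hat{\psi}_0'(y^\ast)$ is a constant multiple of $\hat{\psi}_0$, hence $r$-harmonic: $(\mathcal{G}_r H_{y^\ast})(x)=0$. Since $(R_r\pi)$ satisfies $(\mathcal{G}_r(R_r\pi))(x) = -\pi(x)$ (the resolvent inverts $\mathcal{G}_r$, as used in the proof of Theorem \ref{singesitys2}), we get $(\mathcal{G}_r V)(x)+\pi(x) = 0$ on $(0,y^\ast)$. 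On $(y^\ast,b)$, $H_{y^\ast}(x) = A(x)-A(y^\ast)+A'(y^\ast)\hat{\psi}_0(y^\ast)/\hat{\psi}_0'(y^\ast)$, so $(\mathcal{G}_r H_{y^\ast})(x) = (\mathcal{G}_r A)(x) - r\bigl(-A(y^\ast)+A'(y^\ast)\hat{\psi}_0(y^\ast)/\hat{\psi}_0'(y^\ast)\bigr)$; combining with $(\mathcal{G}_r(R_r\pi))(x)=-\pi(x)$ and $A = -R_r(\mathcal{G}_rA)$ gives a clean expression, and the key fact is that $(\mathcal{G}_rA)(x)$ is non-positive on $(y^\ast,b)$ — which holds because $y^\ast < x_0 = \inf\{x>\hat x:(\mathcal{G}_rA)(x)=0\}$ forces $(\mathcal{G}_rA)(x) < 0$ for $x\in(y^\ast,x_0)$... no, wait: $y^\ast\in(\hat x,x_0)$ means $(\mathcal{G}_rA)(y^\ast)>0$, so I need instead to track the constant term. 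The correct observation is that $(\mathcal{G}_rV)(x)+\pi(x) = (\mathcal{G}_rA)(x) - (\mathcal{G}_rA)(y^\ast)$ on $[y^\ast,b)$ after simplification using $A = -R_r(\mathcal{G}_rA)$ and $(\mathcal{G}_rA)(y^\ast)/r$ appearing as the constant (as in \eqref{rep2}); this is $\le 0$ because $(\mathcal{G}_rA)$ is non-increasing on $(\hat x,b)\supseteq[y^\ast,b)$, and it is non-increasing in $x$ for the same reason; and it is continuous across $y^\ast$ with value $0$ there, matching the $(0,y^\ast)$ piece. So the proof of the second assertion reduces to: (a) the piecewise computation giving $(\mathcal{G}_rV)(x)+\pi(x)=\min\{0, (\mathcal{G}_rA)(x)-(\mathcal{G}_rA)(y^\ast)\}$, and (b) invoking condition (i) of Theorem \ref{singesitys2} for monotonicity and sign.

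The main obstacle I anticipate is getting the constant terms exactly right in the $(y^\ast,b)$ computation and confirming the $C^1$-matching of $(\mathcal{G}_rV)+\pi$ across $y^\ast$ — in particular verifying that the first-order optimality condition \eqref{foc1} is precisely what makes the derivative of $(\mathcal{G}_rV)(x)+\pi(x)$ continuous (or the function itself vanish to the right order) at $y^\ast$, so that no spurious positive jump appears. Once the bookkeeping is done, both the sign and the monotonicity are direct consequences of the monotonicity hypotheses already imposed on $(\mathcal{G}_rA)$ in Theorem \ref{singesitys2}, so the conceptual content is light; the risk is purely computational.
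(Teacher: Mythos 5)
Your treatment of the second assertion is sound and matches the paper's (much terser) argument: the paper simply asserts the identity $(\mathcal{G}_rF_{y^\ast})(x)+\pi(x)=((\mathcal{G}_rA)(x)-(\mathcal{G}_rA)(y^\ast))\mathbbm{1}_{[y^\ast,b)}(x)$ and concludes from the monotonicity of $(\mathcal{G}_rA)$ on $(\hat x,b)$, whereas you correctly identify that the constant term on $[y^\ast,b)$ collapses to $-(\mathcal{G}_rA)(y^\ast)$ precisely because the first-order condition \eqref{foc1} combines with $A=-R_r(\mathcal{G}_rA)$ and \eqref{Green}. Two small blemishes: your closing summary $\min\{0,(\mathcal{G}_rA)(x)-(\mathcal{G}_rA)(y^\ast)\}$ is not literally the indicator expression for $x$ below $\hat x$, where $(\mathcal{G}_rA)(x)$ may drop below $(\mathcal{G}_rA)(y^\ast)>0$, so stick with the piecewise form you actually derived; and no $C^1$-matching at $y^\ast$ is needed, since a function that is identically $0$ on $(0,y^\ast]$ and non-increasing from the value $0$ on $[y^\ast,b)$ is globally non-increasing and non-positive.

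The genuine problem is in your justification of the first display. The extremum in \eqref{family3} must be a supremum, not an infimum: $y^\ast$ is defined as $\argmax\{A'(x)/\hat{\psi}_0'(x)\}$, the ratio increases up to $y^\ast$ and is non-increasing thereafter, so for $x\leq y^\ast$ the \emph{supremum} of $A'(y)/\hat{\psi}_0'(y)$ over $y\geq x$ sits at $y^\ast$, and for $x\geq y^\ast$ it sits at $y=x$ --- exactly matching $A'(y^\ast\lor x)/\hat{\psi}_0'(y^\ast\lor x)$ and the piecewise form \eqref{family2}. The infimum of a unimodal function with peak at $y^\ast$ over $[x,b)$ is attained at the left endpoint or in the limit $y\uparrow b$, never at the peak, so the argument you give (``the infimum is attained at $y^\ast\lor x$ because the ratio is increasing then decreasing'') establishes the opposite of what it claims, and no appeal to $\lim_{x\uparrow b}(\mathcal{G}_rA)(x)<-\varepsilon$ can rescue it. The $\inf$ in the lemma as printed is evidently a misprint (compare the associated stopping problem in Lemma \ref{apulemma}, whose value is the usual supremum over threshold rules); you should flag it and prove the $\sup$ version rather than contort the monotonicity argument to fit the misprint. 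With that correction the first part reduces, as you say, to differentiating \eqref{family1} and reading off \eqref{family2}, which is all the paper's one-line proof of that part amounts to.
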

\begin{proof}
The representation \eqref{family3} is a direct consequence of the findings of Theorem \ref{singesitys2}. Moreover, since $V(x)=F_{y^\ast}(x) = (R_r\pi)(x)+H_{y^\ast}(x)$ satisfies the identity $(\mathcal{G}_rF_{y^\ast})(x)+\pi(x)=((\mathcal{G}_rA)(x)-(\mathcal{G}_rA)(y^\ast))\mathbbm{1}_{[y^\ast,b)}(x)$ for all $x\in \mathcal{I}$ and $(\mathcal{G}_rA)(x)$ is decreasing on
$[y^\ast,b)$, we find that $(\mathcal{G}_rV)(x)+\pi(x)$ is non-positive and non-increasing on $\mathcal{I}$.
\end{proof}
Lemma \ref{apua1} demonstrates that the marginal value of the optimal singular stochastic control policy constitutes the solution of an associated nonlinear programming problem. As we will observe later in this study, combining this property with the monotonicity of the mapping $(\mathcal{G}_rV)(x)+\pi(x)$ can be utilized in the determination of situations under which the marginal value $V'(x)$ constitutes the value of an associated optimal stopping problem.
In order to accomplish this task, we first have to make some extra assumptions on the infinitesimal characteristics of the underlying process. These extra assumptions are listed in the following.
\begin{assumption}\label{oletukset}
Assume that the following conditions are met:
\begin{itemize}
  \item[(A)] the infinitesimal drift $\mu(x)$ and volatility coefficient $\sigma(x)$ are continuously differentiable on $\mathcal{I}$ and
$$
\int_{x-\varepsilon}^{x+\varepsilon}\frac{1 + |\mu(y)+\sigma'(y)\sigma(y)|}{\sigma^2(y)}dy < \infty
$$
for some $\varepsilon > 0$ and all $x\in \mathcal{I}$
\item[(B)]  $r>\mu'(x)$ for all $x\in \mathcal{I}$, that is, the rate at which a unit of account appreciates is decreasing.
\end{itemize}
\end{assumption}

Assumption \ref{oletukset}(A) guarantee the existence of a weak solution for the
stochastic differential equation (cf. \cite{Karatzas88}, pp.
342--353)
\begin{align}\label{sde2}
d\hat{X}_t = \left(\mu(\hat{X}_t)+\sigma'(\hat{X}_t)\sigma(\hat{X}_t)\right)dt + \sigma(\hat{X}_t)dW_t,\quad \hat{X}_0=x.
\end{align}
It is clear that the density of the scale of $\hat{X}$ is related to the density of the scale of $X$ through the identity $\hat{S}'(x)=S'(x)/\sigma^2(x)$. The density of the speed measure of $\hat{X}$ reads, in turn, as $\hat{m}'(x)=2/S'(x)$.

Given the process $\hat{X}$ characterized by the stochastic differential equation \eqref{sde2} and the marginal value \eqref{family3} of the optimal policy, we now consider the optimal stopping problem
\begin{align}\label{family2b}
\hat{V}(x) = (R_r\pi)'(x) + \sup_{\tau} \E_x\left[e^{-\int_0^{\tau\wedge \hat{\tau}_0}\rho(\hat{X}_s)ds}A'(\hat{X}_{\tau\wedge\hat{\tau}_0})\right],
\end{align}
where $\hat{\tau}_0 = \inf\{t\geq 0:\hat{X}_t=0\}$ and $\rho(x)=r-\mu'(x)$. Given that the marginal expected cumulative present value $(R_r\pi)'(x)$ is independent of the stopping strategy, we can naturally focus on the stopping problem
\begin{align}\label{family2c}
\hat{H}(x) = \sup_{\tau} \E_x\left[e^{-\int_0^{\tau\wedge \hat{\tau}_0}\rho(\hat{X}_s)ds}A'(\hat{X}_{\tau\wedge\hat{\tau}_0})\right]
\end{align}
and compare it with  the marginal value of \eqref{singular2}.

We immediately obtain the following useful result.
\begin{lemma}\label{apulemma}
Assume that the conditions of Theorem \ref{singesitys2} and Assumptions \ref{oletukset} are satisfied. Then, $H'(x)=H_{y^\ast}'(x) \geq \hat{H}(x)$ for  all $x\in \mathcal{I}$. Moreover, if $0$ is unattainable for $X$, $\lim_{x\downarrow 0}\varphi'(x)=-\infty$ and $\lim_{x\uparrow b}\psi'(x)=\infty$, then $H'(x)=H_{y^\ast}'(x) = \hat{H}(x)$ and $\tau^\ast=\inf\{t\geq 0: \hat{X}_t\geq y^\ast\}$ is an optimal stopping time.
\end{lemma}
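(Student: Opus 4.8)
The plan is to recognise $H_{y^\ast}'$, the function in \eqref{family2} with $y=y^\ast$, as a $\rho$-excessive majorant of the reward $A'$ for the diffusion $\hat{X}$. Granting this, the inequality $H'(x)=H_{y^\ast}'(x)\ge\hat{H}(x)$ is the easy ``verification'' half of optimal stopping; and under the extra boundary hypotheses the hitting time $\tau^\ast=\inf\{t\ge0:\hat{X}_t\ge y^\ast\}$ will be seen to attain the candidate value, which forces equality. The identity $H'(x)=H_{y^\ast}'(x)$ itself I would read off from Lemma~\ref{apua1} together with the relation $V=(R_r\pi)+H$ established around \eqref{singular2}.

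First I would record the operator identity underlying \eqref{sde2}: differentiating $(\mathcal{G}_rg)(x)=\tfrac12\sigma^2(x)g''(x)+\mu(x)g'(x)-rg(x)$ in $x$ yields
\begin{align*}
(\mathcal{G}_rg)'(x)=\tfrac12\sigma^2(x)(g')''(x)+\bigl(\mu(x)+\sigma(x)\sigma'(x)\bigr)(g')'(x)-\bigl(r-\mu'(x)\bigr)g'(x),
\end{align*}
i.e.\ $(\mathcal{G}_rg)'=\hat{\mathcal{G}}_\rho g'$ where $\hat{\mathcal{G}}_\rho$ is the generator of $\hat{X}$ killed at rate $\rho=r-\mu'$; Assumption~\ref{oletukset}(A) is precisely what makes $\hat{X}$ and this computation legitimate, while Assumption~\ref{oletukset}(B) gives $\rho>0$. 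Applied to the fundamental solutions it shows that $\psi',\varphi'$ (hence $\hat{\psi}_0'=\psi'$ when $0$ is unattainable for $X$) are $\rho$-harmonic for $\hat{X}$.

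Next I would check the two defining properties of the superharmonic majorant. Majorization: $H_{y^\ast}'(x)=A'(x)$ on $[y^\ast,b)$, while on $(0,y^\ast)$ the inequality $H_{y^\ast}'(x)=\frac{A'(y^\ast)}{\hat{\psi}_0'(y^\ast)}\hat{\psi}_0'(x)\ge A'(x)$ is exactly the statement (Theorem~\ref{singesitys2}) that $A'(\cdot)/\hat{\psi}_0'(\cdot)$ is maximised at $y^\ast$, using $\hat{\psi}_0'>0$. Superharmonicity: applying the operator identity to $g=H_{y^\ast}$ and invoking $(\mathcal{G}_rV)(x)+\pi(x)=(\mathcal{G}_rH_{y^\ast})(x)=\bigl((\mathcal{G}_rA)(x)-(\mathcal{G}_rA)(y^\ast)\bigr)\mathbbm{1}_{[y^\ast,b)}(x)$ from Lemma~\ref{apua1}, one gets $(\hat{\mathcal{G}}_\rho H_{y^\ast}')(x)=(\mathcal{G}_rA)'(x)\mathbbm{1}_{[y^\ast,b)}(x)\le0$, the sign coming from the monotonicity of $(\mathcal{G}_rA)$ on $(\hat{x},b)\supseteq[y^\ast,b)$ (Theorem~\ref{singesitys2}(i)). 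The delicate point is that $H_{y^\ast}'$ should have no convex kink at $y^\ast$: but the first-order condition \eqref{foc1} is equivalent to $\frac{d}{dx}\bigl(A'(x)/\hat{\psi}_0'(x)\bigr)\big|_{x=y^\ast}=0$, i.e.\ $A''(y^\ast)=\frac{A'(y^\ast)}{\hat{\psi}_0'(y^\ast)}\hat{\psi}_0''(y^\ast)$, so $H_{y^\ast}'\in C^1$ across $y^\ast$ (equivalently $H_{y^\ast}\in C^2$, the usual smooth fit) and the local-time term in the It\^o--Tanaka expansion of $e^{-\int_0^t\rho(\hat{X}_s)ds}H_{y^\ast}'(\hat{X}_t)$ drops out. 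Hence $t\mapsto e^{-\int_0^{t\wedge\hat{\tau}_0}\rho(\hat{X}_s)ds}H_{y^\ast}'(\hat{X}_{t\wedge\hat{\tau}_0})$ is a supermartingale, and optional sampling (after a routine localization, with the required integrability supplied by Assumption~\ref{1assumptions} and the monotonicity above) gives $H_{y^\ast}'(x)\ge\E_x\bigl[e^{-\int_0^{\tau\wedge\hat{\tau}_0}\rho(\hat{X}_s)ds}A'(\hat{X}_{\tau\wedge\hat{\tau}_0})\bigr]$ for every $\tau$; taking the supremum proves $H'(x)=H_{y^\ast}'(x)\ge\hat{H}(x)$.

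For the second assertion the extra hypotheses fix the boundary behaviour of $\hat{X}$: since $0$ is unattainable for $X$ one has $\hat{\psi}_0=\psi$, while $\lim_{x\downarrow0}\varphi'(x)=-\infty$ makes $-\varphi'$ a positive, decreasing $\rho$-fundamental solution of $\hat{X}$ that blows up at $0$, so $0$ is unattainable for $\hat{X}$, and likewise $\lim_{x\uparrow b}\psi'(x)=\infty$ forces $b$ unattainable. Thus $\hat{\tau}_0=\infty$ a.s.\ and $\tau^\ast$ is simply the first hitting time of $y^\ast$. For $x\ge y^\ast$ we have $\tau^\ast=0$ and the value equals $A'(x)=H_{y^\ast}'(x)$, while for $x<y^\ast$ the classical Laplace transform of the hitting time of a linear diffusion with killing gives $\E_x\bigl[e^{-\int_0^{\tau^\ast}\rho(\hat{X}_s)ds}\bigr]=\hat{\psi}_0'(x)/\hat{\psi}_0'(y^\ast)$; since $\hat{X}_{\tau^\ast}=y^\ast$, this yields $\E_x\bigl[e^{-\int_0^{\tau^\ast}\rho(\hat{X}_s)ds}A'(\hat{X}_{\tau^\ast})\bigr]=\frac{A'(y^\ast)}{\hat{\psi}_0'(y^\ast)}\hat{\psi}_0'(x)=H_{y^\ast}'(x)$. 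Combined with the first assertion this gives $\hat{H}(x)=H_{y^\ast}'(x)=H'(x)$ and shows $\tau^\ast$ is optimal. I expect the main obstacle to be the boundary and integrability bookkeeping in these two verification steps --- justifying optional sampling in full generality when $0$ may be attainable for $\hat{X}$ (so that $A'(\hat{X}_{\hat{\tau}_0})$ must be controlled), and reading the boundary classification of $\hat{X}$ off the stated one-sided limits so that $\hat{\tau}_0=\infty$ and the hitting-time Laplace transform are genuinely available; the analytic core --- the operator identity, the majorization, and the smooth fit supplied by \eqref{foc1} --- is short.
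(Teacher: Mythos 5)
Your proposal is correct and follows essentially the same route as the paper: the operator identity $(\mathcal{G}_r g)'=\hat{\mathcal{G}}_\rho g'$ turning $\psi',\varphi'$ into $\rho$-harmonic functions of $\hat{X}$, a majorization-plus-supermartingale verification argument for the inequality $H_{y^\ast}'\geq\hat{H}$, and the identification of $\psi'$ and $-\varphi'$ as the increasing and decreasing fundamental solutions of $\hat{X}$ killed at rate $\rho$ (via the stated boundary limits) so that the hitting time of $y^\ast$ attains the value. Your explicit handling of the $C^2$-fit at $y^\ast$ through \eqref{foc1} makes precise a point the paper treats only implicitly (via $|H_{y^\ast}'''(y^\ast\pm)|<\infty$), but the argument is the same.
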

\begin{proof}
We notice that Theorem \ref{singesitys1} guarantees that $H(x)=H_{y^\ast}(x)$. Utilizing this identity and the known properties of the value $H_{y^\ast}(x)$ first implies that $H_{y^\ast}'(x) \geq A'(x)$ for all $x\in \mathcal{I}$. Second, since $H_{y^\ast}(x)$ satisfies the identity $(\mathcal{G}_rH_{y^\ast})(x)=0$ for all $x\in (0,y^\ast)$, we find by ordinary differentiation $(\mathcal{G}_rH_{y^\ast})'(x)=0$ for all $x\in (0,y^\ast)$. Third, since $(\mathcal{G}_rH_{y^\ast})(x)=(\mathcal{G}_rA)(x)+r\left(A(y^\ast)-A'(y^\ast)\frac{\hat{\psi}_0(y^\ast)}{\hat{\psi}_0'(y^\ast)}\right)$ for all $x\in [y^\ast,b)$, we find by ordinary differentiation $(\mathcal{G}_rH_{y^\ast})'(x)=(\mathcal{G}_rA)'(x)\leq 0$ for all $x\in (y^\ast,b)$. Consequently, we observe that $(\mathcal{G}_rH_{y^\ast})'(x)\leq 0$ for all $x\in \mathcal{I}\setminus\{y^\ast\}$ and $|H_{y^\ast}'''(y^\ast\pm)|<\infty$. Since
$$
(\mathcal{G}_rH_{y^\ast})'(x)= \frac{1}{2}\sigma^2(x)H_{y^\ast}'''(x)+(\mu(x)+\sigma(x)\sigma'(x))H_{y^\ast}''(x)-\rho(x)H_{y^\ast}(x)
$$
we find by applying the It{\^o}-Döblin theorem to the function $H_{y^\ast}'(x)$ that
\begin{align*}
H_{y^\ast}'(x) &= \E_x\left[e^{-\int_0^{\tau_n}\rho(\hat{X}_s)ds}H_{y^\ast}'(\hat{X}_{\tau_n})-\int_0^{\tau_n}e^{-\int_0^{t}\rho(\hat{X}_s)ds}
(\mathcal{G}_rH_{y^\ast})'(\hat{X}_t)dt\right]\\
&\geq \E_x\left[e^{-\int_0^{\tau_n}\rho(\hat{X}_s)ds}
A'(\hat{X}_{\tau_n})\right],
\end{align*}
where $\tau_n$ is a sequence of almost surely finite stopping times converging to an arbitrary stopping time $\tau\wedge \hat{\tau}_0$ as $n\uparrow\infty$.
Invoking monotone convergence now implies that
$$
H_{y^\ast}'(x) \geq \E_x\left[e^{-\int_0^{\tau\wedge \hat{\tau}_0}\rho(\hat{X}_s)ds}
A'(\hat{X}_{\tau\wedge \hat{\tau}_0})\right].
$$
Since this inequality is valid for any stopping time, it has to be valid for the optimal one from which the alleged inequality follows.

In order to prove the second part of our lemma, it is sufficient to show that the value $H_{y^\ast}'(x)$ can be attained by following an admissible stopping time.
To see that this is indeed the case, we notice
since $\psi'(x)$ and $\varphi'(x)$ satisfy the ordinary differential equation
\begin{align}
\frac{1}{2}\sigma^2(x)v''(x)+(\mu(x)+\sigma(x)\sigma'(x))v'(x)-\rho(x)v(x) = 0\label{ode2}
\end{align}
and $\varphi''(x)\psi'(x)-\varphi'(x)\psi''(x)=2rB\hat{S}'(x)\neq 0$ all solutions of \eqref{ode2} have to be of the form $v(x)=c_1\psi'(x)+c_2\varphi'(x)$. On the other hand, as was shown in Lemma 3.3 and Lemma 3.4 of \cite{Al04}, Assumption \ref{oletukset}(B) and the assumed boundary behavior of the underlying diffusion guarantee that the fundamental solutions  $\psi(x)$ and $\varphi(x)$ are strictly convex on $\mathcal{I}$. Hence, we find that for all $x\in (z,y)$
$$
\E_x\left[e^{-\int_0^{\tilde{\tau}}\rho(\hat{X}_s)ds}\right] =\frac{v_1(x)}{v_1(z)}+\frac{v_2(x)}{v_2(y)},
$$
where $\tilde{\tau}=\inf\{t\geq 0: \hat{X}_t\not \in (z,y)\}$ denotes the first exit time of $\hat{X}$ from $(z,y)$, $v_1(x) = \frac{\varphi'(y)}{\psi'(y)}\psi'(x)-\varphi'(x)$ is the decreasing and $v_2(x)=\psi'(x)-\frac{\psi'(z)}{\varphi'(z)}\varphi'(x)$ the increasing fundamental solutions of \eqref{ode2} for $\hat{X}$ killed at the boundaries $z,y$. The assumed boundary behavior implies that $\lim_{z\downarrow 0}\psi'(z)/\varphi'(z)=0$ and $\lim_{y\rightarrow b}\varphi'(y)/\psi'(y)=0$. Hence, we notice by invoking the condition $\lim_{z\downarrow 0}\varphi'(z)=-\infty$ that
$$
\lim_{z\downarrow 0}\E_x\left[e^{-\int_0^{\tilde{\tau}}\rho(\hat{X}_s)ds}\right] = \frac{\psi'(x)}{\psi'(y)}+\lim_{z\downarrow 0}\frac{\frac{\varphi'(y)}{\psi'(y)}\psi'(x)-\varphi'(x)}{\frac{\varphi'(y)}{\psi'(y)}\psi'(z)-\varphi'(z)}=\frac{\psi'(x)}{\psi'(y)}.
$$
In a completely analogous fashion, we observe that
$$
\lim_{y\uparrow b}\E_x\left[e^{-\int_0^{\tilde{\tau}}\rho(\hat{X}_s)ds}\right] = \frac{\varphi'(x)}{\varphi'(z)}.
$$
Consequently, we notice that $\psi'(x)$ constitutes the increasing and $-\varphi'(x)$ the decreasing fundamental solutions of \eqref{ode2} for
$\hat{X}$ killed at the rate $r-\mu'(x)$ and, therefore, we have
$$
H_{y^\ast}'(x)=\E_x\left[e^{-\int_0^{\hat{\tau}_{y^\ast}\wedge \hat{\tau}_0}\rho(\hat{X}_s)ds}
A'(\hat{X}_{\hat{\tau}_{y^\ast}\wedge \hat{\tau}_0})\right],
$$
where $\hat{\tau}_{y^\ast}=\inf\{t\geq 0: \hat{X}_t\geq y^\ast\}$.
\end{proof}
Lemma \ref{apulemma} demonstrates that the marginal value of the optimal singular policy dominates the value of the associated stopping problem. Lemma \ref{apulemma} also characterizes in terms of the boundary behavior of the underlying diffusion and the limiting behavior of the derivatives of the fundamental solutions circumstances under which the the marginal value of the optimal singular policy coincides with the value of the associated stopping problem. It is worth noticing that the verification of these conditions is a relatively straightforward task in the cases where $b=\infty$ and the drift and volatility coefficients admit the series representation $\mu(x)x/\sigma^2(x)=\sum_{k=0}^{\infty}\lambda_kx^k$ and $x^2/\sigma^2(x)=\sum_{k=0}^{\infty}\delta_kx^k$ since in that case the solutions of $(\mathcal{G}_ru)(x)=0$ take the form (a {\em Frobenius series} representation, cf. \cite{Zwillinger97} pp. 364--370)
$$
u(x)=C_1x^{\eta_{+}}\sum_{k=0}^{\infty}c_kx^k+C_2x^{\eta_{-}}\sum_{k=0}^{\infty}\hat{c}_kx^k
$$
where
$$
\eta_{+}=\frac{1}{2}-\lambda_0+\sqrt{\left(\frac{1}{2}-\lambda_0\right)^2+2r\delta_0}
$$
and
$$
\eta_{-}=\frac{1}{2}-\lambda_0-\sqrt{\left(\frac{1}{2}-\lambda_0\right)^2+2r\delta_0}
$$
constitute the roots of the indicial equation $\eta(\eta-1)+2\lambda_0\eta-r\delta_0=0$. It is clear that if $r\delta_0>\lambda_0$, then $\eta_+>1$ and, consequently, $x^{\eta_{+}}$ is increasing, strictly convex, and satisfies the limiting condition $\eta_{+}x^{\eta_{+}-1}\rightarrow\infty$ as $x\rightarrow\infty$.

Given our observations on the marginal value, we now ask under the conditions of Lemma \ref{apulemma} the following question:
\begin{problem}
Is there a nondecreasing representing  function $\tilde{f}:\mathcal{I}\mapsto \mathbb{R}_+$ for which
\begin{align}\label{secondrep}
\E_x\left[\sup\{\tilde{f}(\hat{X}_t)\mathbbm{1}_{[y^\ast,b)}(\hat{X}_t);t<\hat{T}_{\rho}\}\right]=\hat{H}(x)=H_{y^\ast}'(x),
\end{align}
where $\hat{T}_{\rho}$ denotes the life time of the process $\hat{X}_t$ killed at the rate $r-\mu'(x)$. 
\end{problem}
In light of the results of Lemma \ref{apulemma} it is clear that if \eqref{secondrep} holds, then we would necessarily have
\begin{align*}
\psi'(x)\int_{x\lor y^\ast}^b\tilde{f}(z)\frac{\psi''(z)}{\psi'^2(z)}dz=H_{y^\ast}'(x).
\end{align*}
Applying \eqref{family1} for $x\geq y^\ast$ then results into the identity
\begin{align*}
\psi'(x)\int_{x}^b\tilde{f}(z)\frac{\psi''(z)}{\psi'^2(z)}dz=A'(x).
\end{align*}
from which we obtain by straightforward differentiation that (cf. \cite{AlMa15})
$$
\tilde{f}(x)=A'(x)-\frac{A''(x)}{\psi''(x)}\psi'(x).
$$

To derive an alternative expression for the representing function $\tilde{f}$ in terms of the representing function $f$, we notice by invoking \eqref{esitys1} that
\begin{align*}
H_{y^\ast}'(x) = \frac{d}{dx}\left[\psi(x)\int_{x\lor y^\ast}^bf(z)\frac{\psi'(z)}{\psi^2(z)}dz\right].
\end{align*}
Consequently, the functions $\tilde{f}$ and $f$ are related with each other through the identity
$$
\psi'(x)\int_{x\lor y^\ast}^b\tilde{f}(z)\frac{\psi''(z)}{\psi'^2(z)}dz = \frac{d}{dx}\left[\psi(x)\int_{x\lor y^\ast}^bf(z)\frac{\psi'(z)}{\psi^2(z)}dz\right].
$$
For $x\in[y^\ast,b)$ we have that
$$
\int_{x}^b\tilde{f}(z)\frac{\psi''(z)}{\psi'^2(z)}dz=\int_{x}^bf(z)\frac{\psi'(z)}{\psi^2(z)}dz-\frac{f(x)}{\psi(x)}.
$$
Standard differentiation then yields
$$
\tilde{f}(x)=\frac{2rS'(x)}{\sigma^2(x)\psi''(x)}\int_0^x\psi(t)((\mathcal{G}_rA)(t)-(\mathcal{G}_rA)(x))m'(t)dt=
-\frac{2S'(x)}{\sigma^2(x)\psi''(x)}\int_{0}^x\frac{\psi'(t)}{S'(t)}(\mathcal{G}_rA)'(t)dt.
$$
Given this expression, we are now in position to establish the following representation result (cf. Proposition 2.13 in \cite{ChSaTa} and Theorem 3.7 in \cite{AlMa15}).
\begin{theorem}\label{apulause2}
Assume that the conditions of Theorem \ref{singesitys2} and Assumptions \ref{oletukset} are satisfied. Assume also that $0$ is unattainable for $X$ and $\hat{X}$ killed at the rate $\rho(x)$ and that the function
$(\mathcal{G}_rA)'(x)/\rho(x)$
is non-increasing on $\mathcal{I}$. Then
\begin{align}\label{rep3}
\tilde{f}(x)=A'(x)-\frac{A''(x)}{\psi''(x)}\psi'(x)=-\frac{\int_{0}^x\psi'(t)(\mathcal{G}_rA)'(t)\hat{m}'(t)dt}{\int_0^x\rho(t)\psi'(t)\hat{m}'(t)dt}
\end{align}
is nondreasing on $[y^\ast,b)$ and satisfies the condition $\tilde{f}(y^\ast)=0$. Moreover,
\begin{align}\label{rep4}
\hat{H}(x)=H_{y^\ast}'(x)= \E_x\left[\sup\left\{-\frac{\int_{y^\ast}^{\hat{X}_t}\psi'(t)(\mathcal{G}_rA)'(t)\hat{m}'(t)dt}
{\int_0^{\hat{X}_t}\rho(t)\psi'(t)\hat{m}'(t)dt}\mathbbm{1}_{[y^\ast,b)}(\hat{X}_t);t<\hat{T}_{\rho}\right\}\right].
\end{align}
\end{theorem}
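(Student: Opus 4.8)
The statement contains two parts: (i) the auxiliary function $\tilde f$ admits the closed integral form displayed in \eqref{rep3}, vanishes at $y^\ast$, and is non-decreasing --- hence nonnegative --- on $[y^\ast,b)$; and (ii) the expected-supremum identity \eqref{rep4}. The plan is to settle (i) by a direct computation plus one monotone-averaging estimate, and then to obtain (ii) almost for free from (i), Lemma \ref{apulemma}, and the construction of $\tilde f$ carried out in the paragraph preceding the theorem. For the integral form I would start from the expression $\tilde f(x)=-\frac{2S'(x)}{\sigma^2(x)\psi''(x)}\int_0^x\frac{\psi'(t)}{S'(t)}(\mathcal{G}_rA)'(t)dt$ already obtained before the theorem, rewrite its integrand through $\hat m'=2/S'$, and then identify $\sigma^2(x)\psi''(x)/S'(x)=\psi''(x)/\hat S'(x)$ with $\int_0^x\rho(t)\psi'(t)\hat m'(t)dt$. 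Since $\psi'$ solves \eqref{ode2} (established in the proof of Lemma \ref{apulemma}), this ODE written in scale--speed form is $(\psi''/\hat S')'=\rho\,\psi'\,\hat m'$, and integrating it from $0$ to $x$ gives the desired identity provided $\lim_{t\downarrow 0}\psi''(t)/\hat S'(t)=0$; this is precisely the boundary condition that the increasing fundamental solution $\psi'$ of $\hat X$ killed at the rate $\rho$ satisfies at the unattainable point $0$ (cf.\ \cite{BorSal02}).

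For $\tilde f(y^\ast)=0$ I would use that the maximiser $y^\ast=\argmax\{A'(x)/\psi'(x)\}$ is interior, so the first-order condition $A''(y^\ast)\psi'(y^\ast)=A'(y^\ast)\psi''(y^\ast)$ holds; substituting it into $\tilde f(y^\ast)=A'(y^\ast)-\frac{A''(y^\ast)}{\psi''(y^\ast)}\psi'(y^\ast)$ yields $0$. Equivalently, the numerator in \eqref{rep3} vanishes at $y^\ast$, which after an integration by parts (the boundary term at $0$ disappearing since $\lim_{t\downarrow 0}\psi'(t)/S'(t)=0$, with $\hat\psi_0=\psi$ because $0$ is unattainable for $X$) is the optimality condition \eqref{foc1}. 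The non-decrease of $\tilde f$ is the only genuinely analytic point. Writing $\tilde f=-N/D$ with $N(x)=\int_0^x\psi'(t)(\mathcal{G}_rA)'(t)\hat m'(t)dt$ and $D(x)=\int_0^x\rho(t)\psi'(t)\hat m'(t)dt$, one has that $D$ is strictly increasing with $D(0+)=0$, while $N'(x)/D'(x)=(\mathcal{G}_rA)'(x)/\rho(x)$ is non-increasing on $\mathcal I$ by hypothesis. Hence $\frac{d}{dx}\frac{N(x)}{D(x)}$ has the sign of $\int_0^x\bigl((\mathcal{G}_rA)'(x)/\rho(x)-(\mathcal{G}_rA)'(t)/\rho(t)\bigr)\rho(t)\psi'(t)\hat m'(t)\,dt\le 0$, so $N/D$ is non-increasing, $\tilde f=-N/D$ is non-decreasing on $\mathcal I$, and $\tilde f\ge 0$ on $[y^\ast,b)$ because $\tilde f(y^\ast)=0$. (The hypotheses of Theorem \ref{singesitys2} enter here only to make $y^\ast$ well defined and to secure the integrability near $0$ that renders $N$ and $D$ finite.)

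For \eqref{rep4}, Lemma \ref{apulemma} gives $\hat H(x)=H_{y^\ast}'(x)$ under the present hypotheses, so it suffices to represent $H_{y^\ast}'(x)$. Set $\hat M:=\sup\{\hat X_t:t<\hat T_{\rho}\}$. The function $\tilde f\,\mathbbm 1_{[y^\ast,b)}$ is, by the preceding, continuous (no jump at $y^\ast$ since $\tilde f(y^\ast)=0$), nonnegative and non-decreasing on $\mathcal I$. Because $\psi'$ is the increasing fundamental solution of $\hat X$ killed at $\rho$ and $0$ is unattainable, $\p_x[\hat M\ge z]=\psi'(x)/\psi'(z)$ for $z\ge x$, so the computation that produced \eqref{esitys1} gives, with $\hat\psi_0$ replaced by $\psi'$, that $\E_x\bigl[\tilde f(\hat M)\mathbbm 1_{[y^\ast,b)}(\hat M)\bigr]=\psi'(x)\int_{x\lor y^\ast}^b\tilde f(z)\frac{\psi''(z)}{\psi'(z)^2}dz$; this equals $H_{y^\ast}'(x)$ by the construction of $\tilde f$, which solves $\psi'(x)\int_x^b\tilde f(z)\psi''(z)/\psi'(z)^2dz=A'(x)$ on $[y^\ast,b)$ and whose two sides agree on $(0,y^\ast)$ as well. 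Since $\tilde f\,\mathbbm 1_{[y^\ast,b)}$ is continuous and non-decreasing it commutes with the pathwise supremum, $\tilde f(\hat M)\mathbbm 1_{[y^\ast,b)}(\hat M)=\sup\{\tilde f(\hat X_t)\mathbbm 1_{[y^\ast,b)}(\hat X_t):t<\hat T_{\rho}\}$ (alternatively one may invoke Proposition 2.1 in \cite{FoKn1}), so taking $\E_x$ yields $\E_x[\sup\{\tilde f(\hat X_t)\mathbbm 1_{[y^\ast,b)}(\hat X_t):t<\hat T_{\rho}\}]=H_{y^\ast}'(x)<\infty$; rewriting $\tilde f$ through the integral form in \eqref{rep3} and using $\int_0^{y^\ast}\psi'(t)(\mathcal{G}_rA)'(t)\hat m'(t)dt=0$ (so that the lower limit in the numerator may be replaced by $y^\ast$) produces \eqref{rep4} verbatim.

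The point I expect to be most delicate is the boundary bookkeeping at $0$: the vanishing of $\lim_{t\downarrow 0}\psi''(t)/\hat S'(t)$ and of $\lim_{t\downarrow 0}\psi'(t)/S'(t)$, the vanishing of the boundary term in the integration by parts, and the clean identification of $\psi'$ and $-\varphi'$ as the increasing and decreasing fundamental solutions of the killed process $\hat X$ with the boundary behaviour that the unattainability hypotheses and Assumption \ref{oletukset} are meant to guarantee --- a matter already partly prepared in the proof of Lemma \ref{apulemma}. The monotone-averaging estimate and the passage from the expected value at the running supremum to the expected supremum are then entirely routine.
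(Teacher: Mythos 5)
Your proposal is correct and follows essentially the same route as the paper's proof: you establish the integral form \eqref{rep3} by integrating the ODE for $\psi''/\hat{S}'$ from $0$ using the boundary condition $\psi''(t)/\hat{S}'(t)\to 0$, prove monotonicity by the same quotient-rule/monotone-averaging estimate based on the non-increase of $(\mathcal{G}_rA)'/\rho$, and obtain \eqref{rep4} from the running-supremum distribution of $\hat{X}$ killed at rate $\rho$ exactly as in the derivation of \eqref{secondrep}. Your additional explicit verifications of $\tilde{f}(y^\ast)=0$ via the first-order condition and of the replacement of the lower integration limit by $y^\ast$ are details the paper leaves implicit, but they are consistent with its argument.
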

\begin{proof}
We first observe that $\psi(x)$ has nice second order properties in the sense that
$$
\frac{d}{dx}\left[\frac{1}{2}\sigma^2(x)\frac{\psi''(x)}{S'(x)}\right]= \rho(x)\frac{\psi'(x)}{S'(x)}>0
$$
for all $x\in \mathcal{I}$. Since $\psi'(x)$ constitutes the increasing fundamental solution of \eqref{ode2} for $\hat{X}$  killed at the rate $\rho(x)$, we notice that the assumed boundary behavior guarantees that $\psi''(x)/\hat{S}'(x)\rightarrow 0$ as $x\downarrow 0$. Hence
$$
\frac{1}{2}\sigma^2(x)\frac{\psi''(x)}{S'(x)} = \int_0^x\rho(t)\frac{\psi'(t)}{S'(t)}dt,
$$
which proves that $\tilde{f}$ has the alleged form \eqref{rep3} under the assumptions of our theorem. It is now sufficient to prove that $\tilde{f}$ is nondecreasing. To see that this is indeed the case, we notice by ordinary differentiation that
\begin{align*}
\tilde{f}'(x)=\frac{\rho(x)\psi'(x)\hat{m}'(x)}{\left(\int_0^x\rho(t)\psi'(t)\hat{m}'(t)dt\right)^2}
\left(\int_{0}^x\rho(t)\psi'(t)\left(\frac{(\mathcal{G}_rA)'(t)}{\rho(t)}-\frac{(\mathcal{G}_rA)'(x)}{\rho(x)}\right)\hat{m}'(t)dt\right)\geq 0
\end{align*}
since $(\mathcal{G}_rA)'(x)/\rho(x)$ was assumed to be non-increasing. Hence, $\tilde{f}$ is nondecreasing on $\mathcal{I}$. Finally, representation
\eqref{rep4} follows from \eqref{secondrep}.
\end{proof}
It is worth noticing that our results actually prove that
\begin{align*}
f'(x)=\tilde{f}(x)\frac{\psi''(x)}{\psi'(x)^2}\psi(x) = \tilde{f}(x)\frac{d\ln\psi'(x)}{d\ln\psi(x)}.
\end{align*}
Hence, applying Proposition \ref{prop y} shows that $\tilde{f}$ does not only dictate the behavior of the value of the stopping problem (cf. Theorem 3.7 in \cite{AlMa15}), it also determines the behavior of the value of the singular control problem.

Under specific circumstances our results are also directly related with the optimal signal results related to Gittins indices (cf. \cite{BaBa,ElKarKar94,Kar1984}). In \cite{BaBa} it was shown that the stopping set of an optimal stopping problem equals to a set where the so called optimal stopping signal $\gamma:\mathcal{I}\to\R$ associated with the particular stopping problem is non-negative. This has been further connected to our representation in \cite[Proposition 2.14]{AlMa15}, where it was proved that the representing function $\tilde{f}(x)$ actually coincides with $\gamma(x)$ on the stopping set of an associated nonstandard optimal stopping problem.

To see how our present representation is related to the optimal stopping signal, we first denote as $\mathcal{T}$ the set of first exit times $\inf\{t\geq 0: \hat{X}_t\not\in (z,y)\}$ from open intervals $(z,y)\subset \mathcal{I}$ with compact closure in $\mathcal{I}$. Our main finding on the connection between the developed approach and optimal signals is summarized in the following.
\begin{theorem}\label{apulause3}
Assume that the conditions of Theorem \ref{singesitys2} and Assumptions \ref{oletukset} are satisfied. Assume also that $0$ is unattainable for $X$ and $\hat{X}$ killed at the rate $\rho(x)$ and that the flow $\pi(x)$ is monotonically increasing on $\mathcal{I}$. Then $x\geq y^\ast$ if and only if $\hat{\gamma}(x)\geq 1$, where
\begin{align*}
\hat{\gamma}(x) = \inf_{\hat{\tau}\in \mathcal{T}}\frac{\E_x\left[\alpha(x)-e^{-\int_0^{\hat{\tau}}\rho(\hat{X}_s)ds}\alpha(\hat{X}_{\hat{\tau}})\right]}{\E_x\left[(R_r\pi)'(x)-
e^{-\int_0^{\hat{\tau}}\rho(\hat{X}_s)ds}(R_r\pi)'(\hat{X}_{\hat{\tau}})\right]}.
\end{align*}
If the flow $\pi(x)$ is monotonically decreasing on $\mathcal{I}$, then $x\geq y^\ast$ if and only if $\check{\gamma}(x)\leq 1$, where
\begin{align*}
\check{\gamma}(x) = \sup_{\hat{\tau}\in \mathcal{T}}\frac{\E_x\left[\alpha(x)-e^{-\int_0^{\hat{\tau}}\rho(\hat{X}_s)ds}\alpha(\hat{X}_{\hat{\tau}})\right]}{\E_x\left[(R_r\pi)'(x)-
e^{-\int_0^{\hat{\tau}}\rho(\hat{X}_s)ds}(R_r\pi)'(\hat{X}_{\hat{\tau}})\right]}
\end{align*}
\end{theorem}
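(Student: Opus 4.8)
The plan is to reduce both equivalences to a single statement about the stopping region $[y^\ast,b)$ of the auxiliary problem \eqref{family2c} and then to transfer it to the ratios $\hat\gamma,\check\gamma$ by pinning down the sign of their denominator. By Lemma \ref{apulemma} (its hypotheses being met here, since unattainability of $0$ for $\hat X$ killed at $\rho$ amounts to $\lim_{x\downarrow 0}\varphi'(x)=-\infty$), one has $\hat H(x)=H_{y^\ast}'(x)$, with optimal stopping time $\hat\tau_{y^\ast}=\inf\{t\ge 0:\hat X_t\ge y^\ast\}$ and hence stopping region $[y^\ast,b)$. Set $g(x):=A'(x)=\alpha(x)-(R_r\pi)'(x)$ and, for $\hat\tau\in\mathcal{T}$, $N(\hat\tau)=\E_x[\alpha(x)-e^{-\int_0^{\hat\tau}\rho(\hat X_s)ds}\alpha(\hat X_{\hat\tau})]$ and $D(\hat\tau)=\E_x[(R_r\pi)'(x)-e^{-\int_0^{\hat\tau}\rho(\hat X_s)ds}(R_r\pi)'(\hat X_{\hat\tau})]$, so that $N(\hat\tau)-D(\hat\tau)=\E_x[g(x)-e^{-\int_0^{\hat\tau}\rho(\hat X_s)ds}g(\hat X_{\hat\tau})]$ and $\hat\gamma(x)=\inf_{\hat\tau\in\mathcal{T}}N(\hat\tau)/D(\hat\tau)$, $\check\gamma(x)=\sup_{\hat\tau\in\mathcal{T}}N(\hat\tau)/D(\hat\tau)$.

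Next I would fix the sign of $D(\hat\tau)$. Since $R_r$ inverts $\mathcal{G}_r$, $(\mathcal{G}_r(R_r\pi))(x)=-\pi(x)$; differentiating this identity and recalling that $\tfrac12\sigma^2(x)v''(x)+(\mu(x)+\sigma(x)\sigma'(x))v'(x)-\rho(x)v(x)$ is the generator of $\hat X$ killed at $\rho$ (cf. the proof of Lemma \ref{apulemma}) shows that $(R_r\pi)'\in C^2(\mathcal{I})$ is the resolvent of $\pi'$ for that process. Hence Dynkin's formula yields
\[
D(\hat\tau)=\E_x\left[\int_0^{\hat\tau}e^{-\int_0^t\rho(\hat X_s)ds}\pi'(\hat X_t)dt\right],
\]
which is strictly positive for every nontrivial $\hat\tau\in\mathcal{T}$ when $\pi$ is increasing and strictly negative when $\pi$ is decreasing.

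For the forward implication, suppose $x\ge y^\ast$. Then $g(x)=H_{y^\ast}'(x)=\hat H(x)$, and since $\hat H$ is excessive for $\hat X$ killed at $\rho$ and dominates $g$ on all of $\mathcal{I}$ (both facts from Lemma \ref{apulemma} and its proof), for every $\hat\tau\in\mathcal{T}$ we obtain $g(x)=\hat H(x)\ge\E_x[e^{-\int_0^{\hat\tau}\rho(\hat X_s)ds}\hat H(\hat X_{\hat\tau})]\ge\E_x[e^{-\int_0^{\hat\tau}\rho(\hat X_s)ds}g(\hat X_{\hat\tau})]$, i.e. $N(\hat\tau)\ge D(\hat\tau)$. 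Dividing by $D(\hat\tau)$ — positive when $\pi$ increases, negative when $\pi$ decreases — gives $N(\hat\tau)/D(\hat\tau)\ge 1$ for all $\hat\tau$ in the increasing case, hence $\hat\gamma(x)\ge 1$, and $N(\hat\tau)/D(\hat\tau)\le 1$ for all $\hat\tau$ in the decreasing case, hence $\check\gamma(x)\le 1$.

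For the converse I would argue by contraposition: if $x<y^\ast$ then $\hat H(x)=A'(y^\ast)\psi'(x)/\psi'(y^\ast)$, which strictly exceeds $g(x)$ because $y^\ast=\argmax\{A'(x)/\psi'(x)\}$ is the unique maximiser (Theorem \ref{singesitys2}, with $\hat\psi_0=\psi$ since $0$ is unattainable for $X$). As $\hat\tau_{y^\ast}$ is optimal, $\hat H(x)=\E_x[e^{-\int_0^{\hat\tau_{y^\ast}}\rho(\hat X_s)ds}g(\hat X_{\hat\tau_{y^\ast}})]$; approximating the one-sided hitting time $\hat\tau_{y^\ast}$ by the exit times $\hat\tau_z:=\inf\{t\ge 0:\hat X_t\notin(z,y^\ast)\}\in\mathcal{T}$ and letting $z\downarrow 0$, one repeats the computation from the proof of Lemma \ref{apulemma} — where $\lim_{z\downarrow 0}\varphi'(z)=-\infty$ forces the $\rho$-discounted probability of exiting at $z$, weighted by $g(z)$, to vanish — to get $\E_x[e^{-\int_0^{\hat\tau_z}\rho(\hat X_s)ds}g(\hat X_{\hat\tau_z})]\to\hat H(x)>g(x)$. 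Hence for some $\hat\tau_z\in\mathcal{T}$, $g(x)<\E_x[e^{-\int_0^{\hat\tau_z}\rho(\hat X_s)ds}g(\hat X_{\hat\tau_z})]$, i.e. $N(\hat\tau_z)<D(\hat\tau_z)$, and dividing by $D(\hat\tau_z)$ gives $N(\hat\tau_z)/D(\hat\tau_z)<1$ (hence $\hat\gamma(x)<1$) when $\pi$ is increasing and $N(\hat\tau_z)/D(\hat\tau_z)>1$ (hence $\check\gamma(x)>1$) when $\pi$ is decreasing. Together with the forward implication this proves both claims. The main obstacle is exactly this last step: passing from the optimal one-sided stopping time to exit times in $\mathcal{T}$ while controlling the boundary contribution $g(z)$ times the discounted probability of exiting at $z$, which is precisely where unattainability of $0$ for $\hat X$ killed at $\rho$ is needed; the rest is routine bookkeeping with the sign of $D(\hat\tau)$.
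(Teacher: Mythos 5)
Your proof follows essentially the same route as the paper's: fix the sign of the denominator via Dynkin's formula applied to $(R_r\pi)'$ (which solves the $\rho$-killed ODE with source $-\pi'$), use the fact that $H_{y^\ast}'$ is the smallest excessive majorant of $A'$ for $\hat{X}$ killed at rate $\rho$ --- with excessivity characterized through exit times in $\mathcal{T}$ --- for the forward implication, and the strict inequality $H_{y^\ast}'(x)>A'(x)$ on $(0,y^\ast)$ for the converse. If anything, your handling of the converse is more careful than the paper's, which applies the one-sided hitting time $\hat{\tau}^\ast\notin\mathcal{T}$ directly, whereas you approximate it by the exit times $\hat{\tau}_z\in\mathcal{T}$ and control the boundary contribution at $z$.
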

\begin{proof}
We first notice that the assumed monotonicity of the flow $\pi(x)$ implies that the expected cumulative present value $(R_r\pi)(x)$ is monotonically increasing as well and, therefore, that $(R_r\pi)'(x)>0$ for all $x\in \mathcal{I}$. On the other hand, since $(R_r\pi)(x)$ satisfies the ordinary differential equation $(\mathcal{G}_r R_r\pi)(x)+\pi(x)=0$ and $\pi(x)$ was assumed to be continuously differentiable, we notice by differentiating that
\begin{align*}
\frac{1}{2}\sigma^2(x)(R_r\pi)'''(x) + (\mu(x)+\sigma(x)\sigma'(x))(R_r\pi)''(x)-\rho(x)(R_r\pi)'(x) =-\pi'(x)<0
\end{align*}
for all $x\in \mathcal{I}$. Consequently, if $\hat{\tau}\in \mathcal{T}$, then
\begin{align*}
\E_x\left[e^{-\int_0^{\hat{\tau}}\rho(\hat{X}_s)ds}(R_r\pi)'(\hat{X}_{\hat{\tau}})\right]=(R_r\pi)'(x)-\E_x\int_0^{\hat{\tau}}e^{-\int_0^{t}\rho(\hat{X}_s)ds}\pi'(\hat{X}_t)dt\leq (R_r\pi)'(x)
\end{align*}
for all $x\in \mathcal{I}$.

It is clear that under the conditions of our theorem $H_{y^\ast}'(x)$ constitutes the smallest excessive majorant of the exercise payoff $A'(x)=\alpha(x)-(R_r\pi)'(x)$ for the diffusion $\hat{X}_t$ killed at the rate $\rho(x)$. However, a nonnegative and measurable function $l:\mathcal{I}\mapsto\mathbb{R}_+\cup\{\infty\}$
is excessive for the diffusion
$\hat{X}_t$ killed at the rate $\rho(x)$ if, and only if, $l(x)$ is continuous and satisfies for all $x\in \mathcal{I}$ the inequality
\begin{align*}
\mathbb{E}_x\left[e^{-\int_0^{\hat{\tau}}\rho(\hat{X}_s)ds}l(\hat{X}_{\hat{\tau}})\right]\leq l(x),
\end{align*}
where  $\hat{\tau}\in \mathcal{T}$ is an arbitrary first exit time from an arbitrary
open set with compact closure in $\mathcal{I}$ (cf. \cite{Dynkin}, Theorem 12.4. on pp. 7--8). Consequently, we find that
$x\in \{x\in \mathcal{I}: H_{y^\ast}'(x)=\alpha(x)-(R_r\pi)'(x)\}=[y^\ast,b)$ if, and only if
\begin{align*}
\E_x\left[e^{-\int_0^{\hat{\tau}}\rho(\hat{X}_s)ds}(\alpha(\hat{X}_{\hat{\tau}})-(R_r\pi)'(\hat{X}_{\hat{\tau}}))\right]\leq \alpha(x)-(R_r\pi)'(x)
\end{align*}
for all $\hat{\tau}\in \mathcal{T}$. Reordering terms then yields that we have
\begin{align*}
\frac{\E_x\left[\alpha(x)-e^{-\int_0^{\hat{\tau}}\rho(\hat{X}_s)ds}\alpha(\hat{X}_{\hat{\tau}})\right]}{\E_x\left[(R_r\pi)'(x)-
e^{-\int_0^{\hat{\tau}}\rho(\hat{X}_s)ds}(R_r\pi)'(\hat{X}_{\hat{\tau}})\right]}\geq 1
\end{align*}
for all $\hat{\tau}\in \mathcal{T}$. On the other hand, if $x\in \{x\in \mathcal{I}: H_{y^\ast}'(x)>\alpha(x)-(R_r\pi)'(x)\}=(a,y^\ast)$, then
\begin{align*}
\E_x\left[e^{-\int_0^{\hat{\tau}^\ast}\rho(\hat{X}_s)ds}(\alpha(\hat{X}_{\hat{\tau}^\ast})-(R_r\pi)'(\hat{X}_{\hat{\tau}^\ast}))\right] > \alpha(x)-(R_r\pi)'(x),
\end{align*}
where $\hat{\tau}^\ast=\inf\{t\geq 0:\hat{X}_t\geq y^\ast\}$ denotes the optimal exercise time. Hence, we find that $[y^\ast,b)=\{x\in \mathcal{I}: \hat{\gamma}(x)\geq 1\}$ as claimed. Establishing the signal interpretation in the monotonically decreasing setting is completely analogous.
\end{proof}

\section{\bf Explicit Example} Our objective is now to illustrate the considered class of singular stochastic control problems and the possible intricacies associated with the developed representation results in an explicitly parameterized example. To this end, we assume that the underlying dynamics evolve in the absence of interventions according to a standard GBM characterized by the stochastic differential equation \eqref{singcontsde} with a drift $\mu(x)=\tilde{\mu} x$ and a volatility coefficient $\sigma(x) = \tilde{\sigma} x$, where $\tilde{\sigma}>0$. We also assume that $\pi(x)=x^\eta$ and $\alpha(x)=K_1 e^{-\nu x} + K_2(1-e^{-\nu x})$, where $\eta\in(0,1)$, $K_1>K_2>0$, and $\nu>0$ are exogenously determined constants.

It is well-known from the literature that in this case the fundamental solutions read as $\psi(x)=x^{\kappa}$ and $\varphi(x)=x^{\vartheta}$,  where
$$
\kappa=\frac{1}{2}-\frac{\tilde{\mu}}{\tilde{\sigma}^2} + \sqrt{\left(\frac{1}{2}-\frac{\tilde{\mu}}{\tilde{\sigma}^2}\right)^2+\frac{2r}{\tilde{\sigma}^2}}>0
$$
and
$$
\vartheta=\frac{1}{2}-\frac{\tilde{\mu}}{\tilde{\sigma}^2} - \sqrt{\left(\frac{1}{2}-\frac{\tilde{\mu}}{\tilde{\sigma}^2}\right)^2+\frac{2r}{\tilde{\sigma}^2}}<0.
$$
Moreover, $(R_r\pi)(x)=Mx^\eta$, where
\begin{align}\label{multiplier}
M=\frac{1}{r-\mu\eta-\frac{1}{2}\sigma^2\eta(\eta-1)}>0.
\end{align}

Utilizing these findings show that if $\kappa >1$ (i.e. if $r>\tilde{\mu}$), then
$$
\frac{A'(x)}{\psi'(x)}=\frac{K_1 e^{-\nu x} + K_2(1-e^{-\nu x})-\eta M x^{\eta-1}}{\kappa x^{\kappa-1}}\in \left(\frac{K_2-\eta M x^{\eta-1}}{\kappa x^{\kappa-1}},\frac{K_1 -\eta M x^{\eta-1}}{\kappa x^{\kappa-1}}\right)
$$
and $A'(x)/\psi'(x)$ attains a global maximum at a uniquely defined threshold
$$
y^\ast \in \left(\left(\frac{M\eta(\kappa-\eta)}{(\kappa-1)K_1}\right)^{\frac{1}{1-\eta}},\left(\frac{M\eta(\kappa-\eta)}{(\kappa-1)K_2}\right)^{\frac{1}{1-\eta}}\right)
$$
satisfying the ordinary first order condition
$$
M\eta(\kappa-\eta){y^\ast}^{\eta-1}-(K_1-K_2)e^{-\nu y^\ast}(\kappa-1+\nu y^\ast)-\nu(K_1-K_2)e^{-\nu y^\ast}=0.
$$
In this case, the value of the optimal policy reads as
$$
V(x)=\begin{cases}
\frac{K_1-K_2}{\nu}\left(e^{-\nu y^\ast}-e^{-\nu x}\right)+K_2(x-y^\ast) + M{y^\ast}^\eta +\frac{A'(y^\ast)}{\kappa}{y^\ast}&x\geq y^\ast\\
Mx^\eta + \frac{A'(y^\ast)}{\kappa}x^\kappa {y^\ast}^{1-\kappa} & x <y^\ast
\end{cases}
$$
and the function needed for the representation as an expected supremum reads as in \eqref{incsing} with
$$
A(x)-\frac{A'(x)}{\psi'(x)}\psi(x)=\frac{(\kappa-1)K_2x}{\kappa}-\frac{\kappa-\eta}{\kappa}Mx^\eta+(K_1-K_2)\left(\frac{1-e^{-\nu x}}{\nu}-\frac{e^{-\nu x}x}{\kappa}\right).
$$
The representing functions $f$ associated with three different thresholds (the optimal one as well as two suboptimal) are illustrated in Figure \ref{kuva1} in the case where
$\sigma = 0.1, r = 0.05, \mu = 0.01, K_1 = 12, K_2 = 10, \nu = 0.1$, and
$\eta = 0.5$. As is clear from Figure \ref{kuva1}, choosing the threshold $y^\ast$ optimally results into a representing function which is nonnegative on the entire state space. As soon as the threshold is chosen in a suboptimal fashion, the representing function can attain also negative values.
\begin{figure}[h!]
\begin{center}
\includegraphics[width=0.5\linewidth]{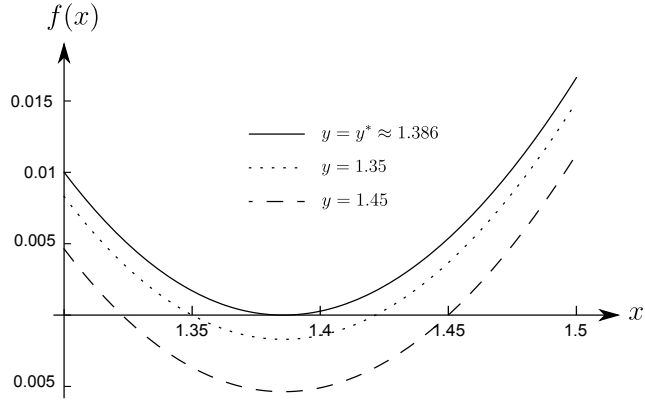}
\end{center}
\caption{\small{The representing function $f$.}}\label{kuva1}
\end{figure}

We also notice that if $\kappa>1$ then
$$
\hat{H}(x)=H_{y^\ast}'(x)=\begin{cases}
K_1e^{-\nu x} + (1-e^{-\nu x})K_2 - \eta M x^{\eta-1}&x\geq y^\ast\\
(K_1e^{-\nu y^\ast} + (1-e^{-\nu y^\ast})K_2-\eta M {y^\ast}^{\eta-1})\left(\frac{x}{y^\ast}\right)^{\kappa-1}&x<y^\ast
\end{cases}
$$
and
$$
\tilde{f}(x)=\frac{1}{\kappa-1}\left((K_1-K_2)e^{-\nu x}(\nu x +\kappa-1)-(\kappa-\eta)\eta M x^{\eta-1}+K_2(\kappa-1)\right).
$$

\section{Conclusions}

We considered a class of singular stochastic control problems admitting the representation of both the value of the optimal policy as well as its marginal value as an expected supremum. Instead of determining the optimal policy and its value at once, we first stated the expected cumulative present value of the revenue flow accrued by following an ordinary reflection policy at an arbitrary constant boundary. We then computed the expected value of an unknown function depending on the running supremum attained at an exponentially distributed random time by utilizing the known probability distribution of the running supremum. Setting these values equal to each other then results into a functional equation from which the unknown function is then determined.

In contrast with optimal stopping problems, the function needed for the representation vanishes at the reflection boundary independently of wether it is optimal or not. As our results indicate, optimality is attained at the threshold where the derivative of the above mentioned function vanishes. This condition is shown to coincide with the standard $C^2$-smoothness condition arising in singular stochastic control problems. Interestingly, since the marginal value of the singular stochastic control problem constitutes under some circumstances the value of an associated stopping problem, a similar supremum representation was shown to hold for the marginal value as well. Since the functions needed for the representations depend on each other, it is precisely this connection which explains the different regularity conditions needed for the characterization of the optimal boundary.

This study focused on a single boundary setting where the reflection policy is exerted only at a single optimal state resulting into a maximized expected cumulative yield.  Even though this is sufficient from the perspective of many financial and economic applications of singular stochastic control theory, it would be mathematically interesting to try to investigate if a similar representation as an expected running supremum could be determined in a case where the underlying can be controlled both upwards as well as downwards within the considered reflecting policy setting (cf. \cite{Shreve84} and \cite{Ma12}). It is clear in light of previous results on two-boundary optimal stopping problems that such a problem formulation typically result into the analysis of quite demanding nontrivial integral equations. A second natural direction towards which our approach could be developed is impulse control (cf. \cite{ChSa15}). Since optimal reflection can be typically seen as a limiting policy arising as the fixed costs associated with the implementation of an impulse policy tend to zero, it would be of interest to see how this limiting behavior arises within an expected supremum setting. Especially, it would be of interest to see how the extreme sensitivity of the value with respect to small changes in costs (cf. \cite{Ok99}) affect the representation. Both these extensions are out of the scope of the present study and left for the future.

\bibliographystyle{amsplain}
\bibliography{Maks}

\end{document}